\title{Arithmetic functions at consecutive shifted primes}
\author{Paul Pollack}
\address{Department of Mathematics\\University of Georgia\\Athens, GA 30602}
\email{pollack@uga.edu}
\author{Lola Thompson}
\address{Department of Mathematics \\Oberlin College\\Oberlin, OH 44074}
\email{lola.thompson@oberlin.edu}
\DeclareMathAlphabet{\curly}{U}{rsfs}{m}{n}
\newtheorem{thm}{Theorem}[section]
\newtheorem{lem}[thm]{Lemma}
\newtheorem{prop}[thm]{Proposition}
\newtheorem*{questions}{Questions}
\newtheorem*{theorem*}{Theorem}
\theoremstyle{remark}\newtheorem*{remark}{Remark}
\numberwithin{equation}{section}
\newcommand{\bad}{\mathrm{bad}}
\newcommand{\diff}{\mathrm{d}}
\newcommand\Ii{\mathcal{I}}
\newcommand\N{\mathbf{N}}
\newcommand\Hh{\mathcal{H}}
\newcommand\lcm{\mathrm{lcm}}
\newcommand\Pp{\curly{P}}
\newcommand\Prob{\mathbf{Pr}}
\newcommand\E{\mathbf{E}}
\renewcommand{\phi}{\varphi}
\renewcommand{\pod}[1]{\mathchoice
  {\allowbreak \if@display \mkern 18mu\else \mkern 8mu\fi (#1)}
  {\allowbreak \if@display \mkern 18mu\else \mkern 8mu\fi (#1)}
  {\mkern4mu(#1)}
  {\mkern4mu(#1)}
}
\begin{document}

\begin{abstract} For each of the functions $f \in \{\phi, \sigma, \omega, \tau\}$ and every natural number $K$, we show that there are infinitely many solutions to the inequalities $f(p_n-1) < f(p_{n+1}-1) < \dots < f(p_{n+K}-1)$, and similarly for $f(p_n-1) > f(p_{n+1}-1) > \dots > f(p_{n+K}-1)$. We also answer some questions of Sierpi\'nski on the digit sums of consecutive primes. The arguments make essential use of Maynard and Tao's method for producing many primes in intervals of bounded length.
\end{abstract}

\subjclass[2010]{Primary 11N37, Secondary 11N05}

\dedicatory{In memory of Paul Bateman and Heini Halberstam}
\maketitle

\section{Introduction}
The study of prime numbers goes back thousands of years and contains many deep and beautiful results. Yet to this day, we have few theorems concerning properties of \emph{consecutive primes}. Perhaps the exception that proves the rule is Shiu's theorem \cite{shiu00} that each coprime progression $a\bmod{q}$ contains arbitrarily long runs of consecutive primes. Results like Shiu's have been thin on the ground for a very simple reason; until extremely recently, analytic number theory had very few tools that could be applied to these problems.

This situation has changed dramatically with the recent breakthrough of Maynard and Tao on bounded gaps between primes. The first to notice this paradigm shift were Banks--Freiberg--Turnage-Butterbaugh \cite{BFTB14}, who gave a strikingly simple re-proof of Shiu's result using the Maynard--Tao method. They also showed how the same circle of ideas leads to the solution of certain longstanding problems of Erd\H{o}s and Tur\'an about increasing and decreasing runs in the sequence of prime gaps. See also \cite{pollack14}, where under the Generalized Riemann Hypothesis, it is shown that the Maynard--Tao method produces arbitrarily long runs of consecutive primes possessing a given primitive root.

The objective of the present paper is to demonstrate how the Maynard--Tao work can be leveraged to study the behavior of arithmetic functions at consecutive shifted primes. Let $p_1 < p_2 < p_3 < \dots$ be the sequence of primes in the usual increasing order. The bulk of the paper is devoted to a proof of the following theorem.

\begin{thm}\label{thm:main} Let $f$ be any of the Euler $\phi$-function, the sum-of-divisors function $\sigma$, the function $\omega$ counting the number of distinct prime factors, or the count-of-all-divisors function $\tau$. We prove that for every $K$, there are infinitely many solutions $n$ to the inequalities
\[ f(p_n-1) < f(p_{n+1}-1) < \dots < f(p_{n+K-1}-1), \]
and similarly with all of the inequalities reversed.
\end{thm}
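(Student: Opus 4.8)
The plan is to reduce Theorem~\ref{thm:main} to the production, for each $f$ and each $K$, of bounded-length intervals $(n,n+H]$ that contain at least $K$ primes, contain no other primes, and along which $f(m-1)$ is strictly monotone as $m$ runs through the primes of the interval in increasing order. Granting this, those primes form a block of at least $K$ consecutive primes $p_\nu<p_{\nu+1}<\cdots$, and the monotonicity is exactly the assertion of the theorem, with the decreasing runs obtained by reversing the monotonicity. So the whole game is to manufacture such intervals, and the point is to rig the construction so that $f(m-1)$ behaves monotonically no matter \emph{which} of the candidate primes actually turn out to be prime.

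To this end, fix a large admissible $k$-tuple $\mathcal H=\{h_1<\cdots<h_k\}$ and run the Maynard--Tao sieve on the variable $n$, restricted to a residue class modulo $W\cdot M$, where $W=\prod_{p\le D_0}p$ is the usual primorial and $M=\lcm(m_1,\dots,m_k)$ for moduli $m_i$ to be chosen (depending only on $K$). The residue class is picked so that: (i) the $W$-trick coprimality holds, so each $n+h_i$ has a chance of being prime; (ii) $m_i\mid n+h_i-1$ for every $i$; and (iii) following the device of Banks--Freiberg--Turnage-Butterbaugh \cite{BFTB14}, every $n+t$ with $1\le t\le H:=h_k$ and $t\notin\mathcal H$ is forced divisible by some small prime, so that the only primes in $(n,n+H]$ are among the $n+h_i$. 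One checks these conditions are mutually consistent and compatible with admissibility of $\mathcal H$ provided $\mathcal H$ is taken inside a common residue class modulo $M$ (a vacuous condition when the $m_i$ are pairwise coprime, and otherwise arranged by replacing $\mathcal H$ with a dilate $M\mathcal H_0+c$, which preserves admissibility). The key additional ingredient is that the sieve weight is further localized by a lower-bound ($\beta$-)sieve factor forcing each cofactor $e_i:=(n+h_i-1)/m_i$ to have no prime factor below $x^{\delta}$, where $x$ is the scale of $n$ and $\delta>0$ is a small fixed constant; since $e_i\asymp x/m_i\asymp x$, such an $e_i$ automatically has at most $1/\delta$ prime factors, is coprime to $m_i$, pins down the exact power of each prime of $m_i$ in $n+h_i-1$, and satisfies $\phi(e_i)/e_i=1+o(1)$, $\sigma(e_i)/e_i=1+o(1)$, $\omega(e_i)\le 1/\delta$, and $\tau(e_i)\le 2^{1/\delta}$.

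With the window in hand, the choice of the $m_i$ is dictated by $f$ and is elementary. For $f=\phi$ or $\sigma$, multiplicativity and the roughness of $e_i$ give $f(n+h_i-1)=f(m_i)f(e_i)=\tfrac{f(m_i)}{m_i}(n+h_i-1)(1+o(1))=\tfrac{f(m_i)}{m_i}\,x\,(1+o(1))$, so it suffices to choose the $m_i$ with $\phi(m_i)/m_i$ (respectively $\sigma(m_i)/m_i$) strictly increasing with gaps bounded below in terms of $K$ --- e.g.\ $m_i$ the $i$-th prime, respectively $m_i=2^{i}$ --- and then for $x$ large the main term orders the values. For $f=\omega$ or $\tau$ we instead have $\omega(n+h_i-1)=\omega(m_i)+\omega(e_i)$ with $1\le\omega(e_i)\le 1/\delta$, and $\tau(n+h_i-1)=\tau(m_i)\tau(e_i)$ with $2\le\tau(e_i)\le 2^{1/\delta}$; taking $m_i$ to be the product of the first $Ci$ primes for a constant $C=C(\delta)>1/\delta$ makes $\omega(m_i)$ increase by more than $1/\delta$ and $\tau(m_i)$ grow by a factor exceeding $2^{1/\delta}$ at each step, which overwhelms the cofactor's contribution. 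Reversing the indexing of the $m_i$ handles the decreasing runs. Thus any block of $\ge K$ of the prime forms $n+h_i$ --- which we have arranged to be consecutive primes --- has $f(\cdot-1)$ strictly monotone, as required.

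The main obstacle is verifying that the auxiliary roughness localization on the $k$ cofactors $e_i$ can coexist with the $W$-trick, the divisibilities $m_i\mid n+h_i-1$, and the Shiu-type compositeness conditions without wrecking the Maynard--Tao asymptotics: one must check that the modified weight still has main term of the expected order of magnitude, that the associated Type~I sums remain under control via the Bombieri--Vinogradov theorem, and that the combinatorial optimization of the sieve polynomial still produces, for $k$ sufficiently large in terms of $K$, a ratio forcing at least $K$ of the $n+h_i$ to be prime for infinitely many $n$. Once this sieve-theoretic bookkeeping is in place, the monotonicity computations above are routine and Theorem~\ref{thm:main} follows.
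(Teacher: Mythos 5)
The reduction to monotone blocks of consecutive primes and the Banks--Freiberg--Turnage-Butterbaugh compositeness trick are fine, and the choice of the fixed moduli $m_i$ is the same kind of ``pre-sieving'' device the paper uses. The genuine gap is the step you yourself flag as ``the main obstacle'' and then defer as bookkeeping: building into the Maynard--Tao weight a simultaneous lower-bound sieve forcing every cofactor $e_i=(n+h_i-1)/m_i$, $1\le i\le k$, to be free of prime factors below $x^{\delta}$. This is not a routine verification; it is the entire analytic content of your argument, and as described it does not obviously work. A lower-bound ($\beta$-)sieve produces \emph{signed} weights, and the Maynard--Tao conclusion ``some $n$ has at least $K$ primes among $n+h_1,\dots,n+h_k$ \emph{and} satisfies the side conditions'' rests on the weight being nonnegative and supported on $n$ satisfying those side conditions. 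Multiplying $w(n)$ by $k$ sieve minorants and by the (also signed) factor $\sum_i\chi_{\Pp}(n+h_i)-(K-1)$ certifies nothing. Nor can you get the roughness ``for free'' from the unmodified Maynard measure: under that measure almost every $n$ has cofactors $e_i$ with many prime factors below any power of $\log N$, so the event you need has measure $o(1)$ and must be engineered into the weights. Moreover the threshold cannot be lowered into the pre-sieving range: to get $\phi(e_i)/e_i=1+o(1)$ (or even $\ge 1-c$) \emph{deterministically} you need $e_i$ free of primes up to roughly $\log N/\log_2 N$, which already exceeds any modulus $W$ compatible with Bombieri--Vinogradov-type input, so a fixed residue class cannot do it. Finally, sieving $k$ forms simultaneously is a dimension-$k$ sieve with losses growing with $k$, and you never check that after these losses the crucial inequality $S_2>(K-1)S_1$ survives for $k$ large in terms of $K$. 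So the proposal, as written, has a hole exactly where the proof has to live.

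For contrast, the paper never modifies the Maynard--Tao weights at all. All deterministic information is extracted from the free choice of the residue class $\nu\bmod W$ (with $W$ enlarged to the primes up to $\log N/(\log_2 N)^2$ for $\omega$ and $\tau$, which requires only a Goldston--Pintz--Y{\i}ld{\i}r{\i}m variant of Bombieri--Vinogradov, Proposition \ref{prop:mm2}); the contribution of primes beyond the pre-sieving range is then controlled \emph{statistically}, not deterministically: a first-moment bound plus Markov's inequality for $\phi$ and $\sigma$ (Lemma \ref{lem:phipexpectation}), an expectation computation for $\omega$, and for $\tau$ an exceptional-set count combined with the mean-square bound of Lemma \ref{lem:csprep} and Cauchy--Schwarz to handle prime multiplicities. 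If you want to salvage your route, the realistic fix is to abandon the roughness requirement and replace your deterministic identities $f(n+h_i-1)=f(m_i)f(e_i)(1+o(1))$ by moment estimates of this kind, i.e.\ show that with probability $1+o(1)$ under the Maynard measure the medium and large primes perturb $f(n+h_i-1)$ by less than the gaps you built in via the $m_i$ --- which is essentially the paper's proof.
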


We conclude by discussing a function of a rather different nature than those considered in Theorem \ref{thm:main}, namely the function $s_g(n)$ giving the sum of the  base-$g$ digits of $n$. Since there is at most one prime $p$ ending in the digit $0$, the problems we are interested in are the same if we consider $s_g(p)$ or $s_g(p-1)$. We will stick to the former, both for reasons of notational simplicity and for historical precendent.

The following questions were posed by Sierpi\'nski \cite{sierpinski61} in a four-page 1961 paper. He worked primarily with $g=10$, but we formulate the questions more generally.

\begin{questions}\label{conj:constant} Are there arbitrarily long runs of consecutive primes $p$ on which $s_g(p)$ is constant? increasing? decreasing?
\end{questions}


In this direction, Sierpi\'nski noted that $s_{g}(p_n) < s_{g}(p_{n+1})$ infinitely often, as a consequence of the easy fact that $s_g(p_n)$ is unbounded. A year later, Erd\H{o}s \cite{erdos62} showed the (much more difficult) complementary result that $s_{g}(p_n) > s_{g}(p_{n+1})$ infinitely often. Neither of these authors could prove a corresponding result for  $p_n, p_{n+1}$, and $p_{n+2}$. However, assuming Dickson's prime $k$-tuples conjecture, Sierpi\'nski (op. cit.) showed that infinitely often $s_{10}(p_n) > s_{10}(p_{n+1}) > s_{10}(p_{n+2})$. Under the same hypothesis, he also showed that $s_{10}(p_n) = s_{10}(p_{n+1})$ infinitely often. At the end of his article, Sierpi\'nski communicates a claim of Schinzel that, assuming ``Hypothesis H'' (described in \cite{SS58}), all three questions have an affirmative answer. (Schinzel's argument appears to have never been published.) We prove this without any unproved hypothesis in the final section of this paper.

\subsection*{Notation} The letter $p$ always represents a prime. Similarly, $p_n$ always represents the $n$th term in a sequence of primes, but \textbf{not} always the $n$th term in the full sequence of primes.  We use $\log_k$ for the $k$-fold iterated logarithm.  An example of nonstandard notation is our use of $\varrho(n)\coloneqq \sum_{p^e\parallel n} e$ for the count of primes dividing $n$ with multiplicity. (The more standard notation, $\Omega(n)$, conflicts with our notation for the Maynard--Tao sample space.)

\section{Preliminaries}

\subsection{Review of the Maynard--Tao approach} Since our work relies crucially on the method of Maynard and Tao, we say a few words about the basic set up. Let $\Hh\coloneqq\{h_1, h_2, \dots,  h_k\}$ denote a fixed \emph{admissible $k$-tuple}, i.e., a list of $k$ distinct integers that does not occupy all of the residue classes modulo $p$ for any prime $p$. The Hardy--Littlewood prime $k$-tuples conjecture then predicts that there are infinitely many $n$ for which the shifted tuple $n+h_1, \dots, n+h_k$ consists entirely of primes. This is currently beyond reach, but the Maynard--Tao work allows us to find values of $n$ for which the tuple $n+h_1, \dots, n+h_k$ contains several prime elements.

We let $N$ be large, and we look for our $n$ in the dyadic interval $[N,2N)$. We consider only $n$ satisfying a mild congruence condition. Let \[ W\coloneqq \prod_{p \leq \log_3{N}}p, \] and choose an integer $\nu$ so that
\begin{equation}\label{eq:basicnu} \gcd(\nu+h_i,W)=1 \quad\text{for all $1 \leq i \leq k$}; \end{equation} the existence of such a $\nu$ follows from the admissibility of $\Hh$. We will restrict attention to $n$ satisfying $n \equiv \nu\pmod{W}$. For our purposes, one should think of this condition on $n$ as a sort of ``pre-sieving'' --- we are allowed to specify in advance the mod $p$ behavior of $n$ at all small primes $p$.
	
Let $w(n)$ denote nonnegative weights (to be chosen momentarily), and let $\chi_{\Pp}$ denote the characteristic function of the set $\Pp$ of prime numbers. We consider the sums
\[ S_1\coloneqq \sum_{\substack{N \leq n < 2N \\ n \equiv \nu\pmod{W}}} w(n)\quad\text{and}\quad S_2\coloneqq \sum_{\substack{N \leq n < 2N \\ n\equiv \nu\pmod{W}}} \left(\sum_{i=1}^{k} \chi_{\Pp}(n+h_i)\right)w(n). \]
The fraction $S_2/S_1$ is a weighted average of the number of primes among $n+h_1, \dots, n+h_k$, where the average is taken over the sample space
\[ \Omega\coloneqq \{N \le n < 2N: n \equiv \nu\pmod{W}\}. \]
Consequently, if $S_2 > (K-1) S_1$ for the positive integer $K$, then at least $K$ of the numbers $n+h_1, \dots, n+h_k$ are primes, for some $n \in \Omega$.

For this strategy to bear fruit, one needs to select weights $w(n)$ so that the sums $S_2$ and $S_1$ can be estimated by the tools of asymptotic analysis, and so that the resulting ratio $S_2/S_1$ is large. Finding such a choice of $w(n)$ is the main innovation
in the Maynard--Tao method.  The following is a restatement of \cite[Proposition 4.1]{maynard14}.

\begin{prop}\label{prop:main-maynard} Let $\theta$ be a positive real number with $\theta < \frac{1}{4}$.
Let $F$ be a piecewise differentable function supported on the simplex $\{(x_1, \dots, x_k): \text{each }x_i \geq 0, \sum_{i=1}^{k} x_i \leq 1\}$. With $R\coloneqq N^{\theta}$, put
\begin{equation}\label{eq:lambdadef} \lambda_{d_1, \dots, d_k}\coloneqq \left(\prod_{i=1}^{k} \mu(d_i) d_i\right) \sum_{\substack{r_1, \dots, r_k \\ d_i \mid r_i\,\forall i \\ (r_i, W)=1\,\forall i}} \frac{\mu(\prod_{i=1}^{k} r_i)^2}{\prod_{i=1}^{k} \phi(r_i)} F\left(\frac{\log{r_1}}{\log{R}},  \dots, \frac{\log{r_k}}{\log{R}}\right)\end{equation}
whenever $\gcd(\prod_{i=1}^{k} d_i,W)=1$, and let $\lambda_{d_1, \dots, d_k}=0$ otherwise. Let
\begin{equation}\label{eq:wndef} w(n)\coloneqq \left(\sum_{d_i \mid n+h_i\,\forall i} \lambda_{d_1, \dots, d_k}\right)^2. \end{equation}
Then as $N\to\infty$,
\begin{align*} S_1 &\sim \frac{\phi(W)^k}{W^{k+1}} N (\log{R})^k \cdot I_k(F), ~~\text{and} \\
S_2 &\sim \frac{\phi(W)^k}{W^{k+1}} \frac{N}{\log{N}} (\log{R})^{k+1} \cdot \sum_{m=1}^{k} J_k^{(m)}(F), \end{align*}
provided that $I_k(F) \neq 0$ and $J_k^{(m)}(F) \neq 0$ for each $m$, where
\begin{align*} I_k(F) :&= \idotsint_{[0,1]^{k}} F(t_1, \dots, t_k)^2\, \diff t_1 \diff t_2 \cdots \diff t_k, \\
	J_k^{(m)}(F):&= \idotsint_{[0,1]^{k-1}} \left(\int_{0}^{1} F(t_1, \dots, t_k)\, \diff t_m\right)^2 \diff t_1 \cdots  \diff t_{m-1} \diff t_{m+1} \cdots \diff t_k.
\end{align*}
\end{prop}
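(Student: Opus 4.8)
The plan is to obtain this as a transcription of Proposition 4.1 of \cite{maynard14}, of which it is essentially a verbatim restatement; the real content lies only in matching the two sets of conventions. First I would recall Maynard's setup and check that it agrees with ours line by line: there $W$ is formed with the same cutoff $\log_3 N$, an integer $\nu$ is fixed with $\gcd(\nu+h_i,W)=1$ as in \eqref{eq:basicnu}, the weights $\lambda_{d_1,\dots,d_k}$ are manufactured from a fixed piecewise differentiable $F$ supported on the standard simplex precisely through formula \eqref{eq:lambdadef}, and $w(n)$ is set as in \eqref{eq:wndef}. The one parameter that needs translating is $\theta$: Maynard states his asymptotics under the hypothesis that the primes possess a level of distribution exceeding $2\theta$, and then records the unconditional specialization. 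Our standing assumption $\theta<\frac{1}{4}$ gives $2\theta<\frac{1}{2}$, so the Bombieri--Vinogradov theorem supplies the needed level; concretely, with $R=N^{\theta}$ the moduli $\prod_{i=1}^{k}\lcm(d_i,e_i)$ that arise after squaring $w(n)$ never exceed $R^2<N^{1/2}$, which is exactly the constraint that the estimation of $S_2$ imposes. With this dictionary in place, the asymptotics for $S_1$ and $S_2$ carry over unchanged.

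For completeness I would also reproduce the skeleton of Maynard's argument, since it explains where the integrals $I_k(F)$ and $J_k^{(m)}(F)$ come from. Expanding $w(n)=\sum_{\mathbf{d},\mathbf{e}}\lambda_{\mathbf{d}}\lambda_{\mathbf{e}}\prod_{i=1}^{k}\mathbf{1}[\lcm(d_i,e_i)\mid n+h_i]$, the sum ranging over pairs of $k$-tuples $\mathbf{d}=(d_1,\dots,d_k)$ and $\mathbf{e}=(e_1,\dots,e_k)$, and then summing over $N\leq n<2N$ with $n\equiv\nu\pmod W$, the coprimality conditions built into the support of $\lambda$ allow the Chinese Remainder Theorem to collapse the inner sum to $\frac{N}{W\prod_{i}\lcm(d_i,e_i)}+O(1)$; the standard bound $|\lambda_{\mathbf{d}}|\ll(\log R)^{k}$ makes the accumulated $O(1)$ negligible, leaving $\frac{N}{W}\sum_{\mathbf{d},\mathbf{e}}\frac{\lambda_{\mathbf{d}}\lambda_{\mathbf{e}}}{\prod_{i}\lcm(d_i,e_i)}$. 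A multidimensional M\"obius inversion --- essentially the inversion in \eqref{eq:lambdadef} run backwards --- diagonalizes this quadratic form into $\sum_{r_1,\dots,r_k}\frac{F(\log r_1/\log R,\dots,\log r_k/\log R)^2}{\prod_{i}\phi(r_i)}$, and a routine Mellin/contour estimate evaluates the latter as $(1+o(1))\,\frac{\phi(W)^k}{W^k}(\log R)^k I_k(F)$; multiplying by $N/W$ gives the stated $S_1$. The evaluation of $S_2$ follows an identical template, except that each prime indicator $\chi_{\Pp}(n+h_i)$ is handled by Bombieri--Vinogradov in place of the trivial count, and the extra sum over $i$ together with a single-variable integration of $F$ produces $\sum_{m=1}^{k}J_k^{(m)}(F)$ and the additional factor $(\log R)/\log N$.

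The one genuine obstacle I anticipate is bookkeeping: making sure that the cutoff defining $W$, the exponent in $R=N^{\theta}$, and the placement of the factors $\phi(W)/W$ are carried consistently through both treatments, so that the constants $\phi(W)^k/W^{k+1}$ and the integrals $I_k(F)$, $J_k^{(m)}(F)$ emerge exactly as written, rather than off by a stray normalizing power or factor. I would pin this down by tracking one reference quantity --- for instance the diagonalized form of $\sum_{\mathbf{d},\mathbf{e}}\frac{\lambda_{\mathbf{d}}\lambda_{\mathbf{e}}}{\prod_{i}\lcm(d_i,e_i)}$ --- through the conventions of \cite{maynard14} and comparing. Finally, I note that the hypotheses $I_k(F)\neq 0$ and $J_k^{(m)}(F)\neq 0$ are needed here only so that the relations $S_1\sim(\cdots)$ and $S_2\sim(\cdots)$ carry content.
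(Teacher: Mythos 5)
Your proposal is correct and matches the paper's treatment: the paper offers no independent proof, presenting the proposition simply as a restatement of \cite[Proposition 4.1]{maynard14}, with the same translation you describe ($\theta<\frac14$ so that $R^2=N^{2\theta}$ stays within the Bombieri--Vinogradov level). Your additional sketch of Maynard's diagonalization argument is consistent with that source and accurate at the level of detail given.
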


So with the weights $w(n)$ chosen as in Proposition \ref{prop:main-maynard}, we see that the ratio $S_2/S_1 \to \theta \frac{\sum_{m=1}^{k} J_k^{(m)}(F)}{I_k(F)}$ as $N\to\infty$. Following Maynard, we define \[ M_k\coloneqq\sup_{F}\frac{\sum_{m=1}^{k} J_k^{(m)}(F)}{I_k(F)}, \] where $F$ ranges over the functions satisfying the conditions of Proposition \ref{prop:main-maynard}. Since $\theta$ can be taken arbitrarily close to $\frac{1}{4}$, the ratio $S_2/S_1$ can be brought arbitrarily close to $\frac{1}{4}M_k$ by a judicious choice of $F$. Consequently, if $h_1, \dots, h_k$ is an admissible $k$-tuple, then the list $n+h_1, \dots, n+h_k$ contains at least $\lceil \frac{1}{4}M_k\rceil$ primes, for infinitely many values of $n$. To judge the strength of this last result, one needs estimates for $M_k$. The following lower bound on $M_k$ appears as \cite[Proposition 4.3]{maynard14}.

\begin{prop}  For all sufficiently large values of $k$,
	\[ M_k > \log{k}-2\log_2{k}-2.\]
\end{prop}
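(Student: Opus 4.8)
The plan is to bound $M_k$ from below by evaluating the quotient $\sum_m J_k^{(m)}(F)/I_k(F)$ at an explicit test function $F$ of product type. I would take $F$ supported on the standard simplex $\{(t_1,\dots,t_k): t_i\ge 0,\ \sum_i t_i\le 1\}$ and, there, equal to $\prod_{i=1}^k g(kt_i)$ for a single-variable profile $g\colon[0,\infty)\to[0,\infty)$ (the factor $k$ inside $g$ reflects that the dominant contributions come from $t_i$ of order $1/k$). This shape is natural: the Euler--Lagrange equation for the problem of maximizing $(\int_0^\infty g)^2/\int_0^\infty g^2$ subject to the $g^2$-weighted mean $\mu\coloneqq(\int_0^\infty g^2)^{-1}\int_0^\infty u\,g(u)^2\,\diff u$ being $\le 1$ forces $g$ to be the reciprocal of a linear function, truncated so that $\int_0^\infty g$ converges. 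Writing $\rho_j\coloneqq\int_0^\infty g^j$ and substituting $u_i=kt_i$, one obtains exact identities
\[ I_k(F)=k^{-k}\rho_2^{k}(1-P_1),\qquad J_k^{(m)}(F)=k^{-(k+1)}\rho_1^{2}\rho_2^{k-1}(1-P_2), \]
where $P_1=\Prob[U_1+\dots+U_k>k]$ for i.i.d.\ nonnegative $U_i$ with density $\rho_2^{-1}g^2$, and $P_2$ is controlled by the analogous deviation probability $\Prob[U_1+\dots+U_{k-1}>k-T]$ ($T$ being the length of the support of $g$); here $P_1,P_2\in[0,1)$. As $F$ is symmetric, $\sum_m J_k^{(m)}(F)=k\,J_k^{(1)}(F)$, so
\[ \frac{\sum_{m=1}^{k}J_k^{(m)}(F)}{I_k(F)}=\frac{\rho_1^{2}}{\rho_2}\cdot\frac{1-P_2}{1-P_1}. \]
Everything thus reduces to a one-variable extremal problem --- make $\rho_1^2/\rho_2$ large with $\mu<1$ --- together with a concentration estimate bounding $P_1,P_2$.

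For the extremal problem, following Maynard I would take $g(t)=(1+At)^{-1}$ on $[0,T]$ and $g(t)=0$ for $t>T$, linking the parameters by $A=L$ and $T=(e^{L}-1)/A$, where $L\coloneqq\log(1+AT)$ is chosen a little below $\log k$ --- concretely $L=\log k-\log_2 k$. A routine computation gives $\rho_1=L/A$ and $\rho_2=(1-e^{-L})/A$; taking $A=L$ then yields $\mu=1-(1+o(1))/L<1$ and
\[ \frac{\rho_1^{2}}{\rho_2}=\frac{L}{1-e^{-L}}=L+o(1)=\log k-\log_2 k+o(1). \]
Observe that $\mu\to 1$ as $k\to\infty$; this is essentially forced, since pushing $\rho_1^2/\rho_2$ any nearer $L$ forces $\mu$ even closer to $1$, and --- as the next step shows --- if $\mu$ approaches $1$ too quickly the concentration estimate fails.

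The step I expect to be the main obstacle is controlling $(1-P_2)/(1-P_1)$, equivalently bounding $P_1=\Prob[U_1+\dots+U_k>k]$. Since $\mu\approx 1-1/L$, the sum has mean $\approx k-k/L$, so one must rule out an upward fluctuation of order $k/L\approx k/\log k$. The subtlety is that the $U_i$, having density proportional to $(1+Au)^{-2}$ on $[0,T]$, are heavy-tailed --- $\E[U_i^{2}]$ is of order $e^{L}/L^{2}$, which is large --- and one must check that $\mu$ approaches $1$ slowly enough (at rate $1/L$, not faster) that this spread does not swamp the gap. Chebyshev's inequality applied to $U_1+\dots+U_k$ gives $P_1=O(L^{2}\E[U_1^{2}]/k)=O(e^{L}/k)=O(1/\log k)$, and likewise for $P_2$; since $\rho_1^2/\rho_2=O(\log k)$, this correction costs only an absolute constant. (A sharper $P_1,P_2=O((\log k)^{-2})$ follows from Bernstein's inequality via $U_i\le T\ll k/\log k$, but is not needed.) Finally $F$ is piecewise differentiable and supported on the simplex with $I_k(F),J_k^{(m)}(F)\ne 0$, hence admissible in the definition of $M_k$, and
\[ M_k\ge\frac{\rho_1^{2}}{\rho_2}\cdot\frac{1-P_2}{1-P_1}\ge\frac{\rho_1^{2}}{\rho_2}(1-P_2)\ge\frac{\rho_1^{2}}{\rho_2}-O(1)=\log k-\log_2 k-O(1)>\log k-2\log_2 k-2 \]
once $k$ is large (using $\log_2 k\to\infty$). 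This proves the proposition.
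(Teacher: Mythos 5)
Your argument is essentially sound, but note that the paper does not prove this bound at all: it is quoted verbatim from Maynard (Proposition 4.3 of \cite{maynard14}), so the only "proof" in the paper is that citation. What you have written is in substance a reconstruction of Maynard's own proof of that proposition: the product-type test function $F=\prod_i g(kt_i)$ restricted to the simplex with $g(t)=(1+At)^{-1}$ truncated at $T$, the reduction of $\sum_m J_k^{(m)}/I_k$ to $\rho_1^2/\rho_2$ times a ratio of probabilities, and a second-moment (Chebyshev-type) bound on the events $\sum U_i>k$ and $\sum U_i>k-T$ — this is exactly Maynard's mechanism, and your parameter choice $L=\log k-\log_2 k$ with $P_1,P_2=O(e^L/k)=O(1/\log k)$ does yield $M_k\ge \log k-\log_2 k-O(1)$, which implies the stated inequality for large $k$ (the paper, in any case, only needs $M_k\to\infty$). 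The one place to tighten the write-up is the claim of "exact identities" for $J_k^{(m)}(F)$: what you actually have is the exact identity for $I_k(F)$ together with the lower bound $J_k^{(m)}(F)\ge k^{-(k+1)}\rho_1^2\rho_2^{k-1}\Prob\bigl[U_1+\dots+U_{k-1}\le k-T\bigr]$, since the inner integral equals $\rho_1/k$ only when $\sum_{i\ne m}u_i\le k-T$ and is merely bounded by $\rho_1/k$ otherwise; as you only need a lower bound on $M_k$, this costs nothing, but it should be stated as an inequality.
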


In fact, the Polymath8b project has strengthened this last result to $M_k > \log{k}+O(1)$. For our purposes, any result guaranteeing that $M_k\to\infty$ as $k\to\infty$ would be sufficient.

\subsection{A variant of the Maynard--Tao theorem} What has been said so far is enough to prove Theorem \ref{thm:main} for $\phi$ and $\sigma$. To  handle $\omega$ and $\tau$, we will use a variant of Proposition \ref{prop:main-maynard} allowing us to extend the range of pre--sieving to all primes $p \leq \log{N}/(\log\log{N})^2$, with at most one exception.

A careful statement of this result requires some preparation. By the Landau--Page theorem (see \cite[p.95]{davenport00}), for large $N$ there is at most one character $\chi_1$ of conductor $q_1 \leq Z\coloneqq \exp(5 \log{N}/(\log_2{N})^2)$ for which $L(s,\chi_1)$ has a zero in the interval $[1-\frac{3}{\log{Z}},1]$. In this case, $\chi_1$ is real, $q_1 > (\log{Z})^2$, and $q_1$ is a product of distinct odd primes together with either $1, 2, 4$, or $8$. If any such $q_1$ exists, we let $p_{\bad}$ denote its largest prime factor. Since the squarefull part of $q_1$ is absolutely bounded, $p_{\bad} \gg \log{q_1} \gg \log_2{N}$.

We will sketch a proof of the following result. In order to handle all cases at once, we let $p_{\bad}\coloneqq 1$ if the exceptional modulus $q_1$ does not exist.

\begin{prop}\label{prop:mm2} Make the same assumptions on $\theta$ and $F$ as in Proposition \ref{prop:main-maynard}. Redefine \[	W\coloneqq \prod_{\substack{p \leq \frac{\log{N}}{(\log_{2}{N})^2} \\ p \neq p_{\bad}}}p.
\]
Define $\lambda_{d_1,\dots, d_k}$ by \eqref{eq:lambdadef} if $(d_1\dots d_k, W p_{\bad})=1$, and put $\lambda_{d_1,\dots,d_k}=0$ otherwise. Define the weights $w(n)$ as in \eqref{eq:wndef}. Then the estimates of Proposition \ref{prop:main-maynard} for $S_1$ and $S_2$ still hold.
\end{prop}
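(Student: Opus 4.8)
The plan is to follow Maynard's proof of Proposition~\ref{prop:main-maynard} nearly verbatim, isolating the one place where the enlarged modulus $W$ forces a new arithmetic input. For $S_1$ nothing of substance changes: that estimate is purely combinatorial, and the total error is $O\bigl(\sum_{\mathbf d,\mathbf e}|\lambda_{\mathbf d}\lambda_{\mathbf e}|\bigr)\ll\lambda_{\max}^2 R^2(\log R)^{O(k)}\ll N^{2\theta+o(1)}$, which is $o(S_1)$ because the (unchanged) main term $\frac{\phi(W)^k}{W^{k+1}}N(\log R)^k I_k(F)$ equals $N^{1-o(1)}$ and $2\theta<\tfrac12$. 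The only cosmetic wrinkle is that we impose $(d_1\cdots d_k,Wp_{\bad})=1$ on the support of $\lambda$ but still sum over $r_i$ coprime merely to $W$ in \eqref{eq:lambdadef}; also demanding $p_{\bad}\nmid r_i$ would change each $\lambda_{\mathbf d}$ by a factor $1+O(k/p_{\bad})=1+o(1)$, since $p_{\bad}\gg\log_2 N$, so the asymptotics for $S_1$ and for the main term of $S_2$ are exactly Maynard's, with the new $W$ in place of the old.

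The real work is the error term in $S_2$, whose main term is extracted exactly as in \cite{maynard14}. The difference from Maynard's setting is quantitative: his $W$ was $(\log_2 N)^{O(1)}$, so $S_1,S_2\gg N/\log N$ and the Bombieri--Vinogradov bound $O_A(N/(\log N)^A)$ was negligible against them; here $\phi(W)/W\asymp1/\log_2 N$ and $N/W=N^{1-o(1)}$, so $S_1$ and $S_2$ are only $N^{1-o(1)}$---\emph{smaller} than $N/(\log N)^A$ for every fixed $A$---and a naive appeal to Bombieri--Vinogradov is too weak. What one needs instead is the form of the theorem adapted to the $W$-trick, which gains an extra factor $1/\phi(W)$:
\[
  \sum_{\substack{r\le Q\\(r,W)=1}}\ \max_{(a,Wr)=1}\ \Bigl|\,\psi(x;Wr,a)-\tfrac{1}{\phi(Wr)}\psi(x)\,\Bigr|\ \ll_A\ \frac{x}{\phi(W)\,(\log x)^{A}},\qquad WQ\le x^{1/2}(\log x)^{-B(A)},
\]
where $\psi(x;q,a)=\sum_{n\le x,\,n\equiv a\,(q)}\Lambda(n)$ (the analogue for $\pi(x;Wr,a)$, which is literally what $S_2$ asks for, follows by partial summation). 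Granting this, the error in $S_2$ is controlled by the routine Cauchy--Schwarz argument of \cite{maynard14}: in the $m$-th piece collect the pairs $(\mathbf d,\mathbf e)$ by $r=\prod_{i\ne m}[d_i,e_i]\le R^2$, apply the displayed bound to one factor and the trivial bound $|\psi(x;Wr,a)-\psi(x)/\phi(Wr)|\ll x\log x/\phi(Wr)$ (together with $\sum_{r\le R^2}\tau_{3k}(r)^2/\phi(r)\ll(\log x)^{O(k)}$) to the other. This gives an error $\ll_{k,A}x\,\phi(W)^{-1}(\log x)^{O(k)-A/2}$, which for $A$ large in terms of $k$ is $o$ of the main term of $S_2$ (itself of order $x\,\phi(W)^{-1}(\log x)^{k}(\log_2 x)^{-k-1}$). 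The internal errors in Maynard's evaluation of that main term are each of relative size $o(1)$ and so persist unchanged.

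It remains to sketch the displayed estimate, which is obtained by rerunning the classical proof of Bombieri--Vinogradov with the fixed common factor $W$ carried along. Expand $\psi(x;Wr,a)=\phi(Wr)^{-1}\sum_{\chi\bmod Wr}\bar\chi(a)\psi(x,\chi)$; the principal character gives $\psi(x)/\phi(Wr)$ up to $O(\phi(Wr)^{-1}(\log x)^2)$, and for the others $\max_a|\cdots|\le\phi(Wr)^{-1}\sum_{\chi\ne\chi_0}|\psi(x,\chi)|$. Pass from $\chi$ to the primitive character $\chi^\ast$ of conductor $f>1$ inducing it (an error $O((\log x)^2)$ per character), sum over $r$, and use $\phi(Wr)=\phi(W)\phi(r)$ to pull $\phi(W)^{-1}$ outside; after reorganizing the sum over $r$ according to $f$ and removing the range $f\le(\log x)^{C}$ via Siegel--Walfisz, one is left with precisely the large-sieve mean-value bound underlying the usual Bombieri--Vinogradov theorem, and the estimate follows. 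The one delicate point is the exceptional real character $\chi_1$ modulo $q_1$: were some modulus $Wr$ in play divisible by $q_1$, the term $\phi(Wr)^{-1}|\psi(x,\chi_1)|\asymp\phi(Wr)^{-1}x^{\beta_1}$ would fail to be $\ll\phi(Wr)^{-1}x(\log x)^{-A}$ in the presence of a Siegel zero, and the bound would collapse. But $p_{\bad}\mid q_1$ while $p_{\bad}\nmid W$ and, by the support condition on $\lambda$, $p_{\bad}\nmid[d_i,e_i]$ for every $i$; hence every modulus $W\prod_{i\ne m}[d_i,e_i]$ occurring in the analysis of $S_2$ is coprime to $p_{\bad}$, so not a multiple of $q_1$, so $\chi_1$ never arises as an inducing character and its potentially enormous contribution is simply absent. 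For every other non-principal primitive character, the Page zero-free region (for conductors $\le(\log x)^{C}$) and the large sieve (above that) suffice. The main obstacle, then, is not any single hard estimate but the conceptual point: the standard Bombieri--Vinogradov input has become too weak once the main terms have shrunk to $N^{1-o(1)}$, it must be replaced by the $\phi(W)^{-1}$-saving variant, and the Siegel zero must be quarantined---which is exactly why $p_{\bad}$ is deleted from $W$.
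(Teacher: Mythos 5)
Your overall architecture is the right one and is essentially the paper's: follow Maynard's argument for Proposition \ref{prop:main-maynard} verbatim, note that the only step genuinely affected by the enlarged $W$ is the Bombieri--Vinogradov input in the error term for $S_2$ (since the main terms are now only $N^{1-o(1)}$, a bound $O_A(N/(\log N)^A)$ is useless), replace it by an equidistribution estimate for the moduli $Wr$ that saves against $N/\phi(W)$, run the Cauchy--Schwarz bookkeeping over $r=\prod_i[d_i,e_i]<R^2$, and observe that altering the support of $\lambda$ to $(d_1\cdots d_k,Wp_{\bad})=1$ only perturbs the asymptotics by factors $(p_{\bad}/\phi(p_{\bad}))^{O_k(1)}=1+o(1)$. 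The paper, however, does not prove the key equidistribution input: it quotes the Goldston--Pintz--Y{\i}ld{\i}r{\i}m pre-sieved Bombieri--Vinogradov theorem (Proposition \ref{prop:BV}, i.e.\ \cite[Lemma 2]{GPY06}), whose saving is $\exp(-c(\log_2 N)^2)$, applied with $P=W$. Your displayed estimate would indeed suffice for the application, and it follows from Proposition \ref{prop:BV} \emph{provided} you add the hypothesis $(r,p_{\bad})=1$ (equivalently $q_1\nmid Wr$), which is absent from your statement; without it the claim is not provable, as your own discussion of the exceptional character implicitly concedes.

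The genuine gap is in your sketched proof of that estimate. After passing to primitive characters $\chi^*$ of conductor $f$, the weight attached to $\chi^*$ is not $1/\phi(f)$ but roughly $\log x/(\phi(W)\phi(f_2))$, where $f_2$ is the part of $f$ coprime to $W$ (one needs $f/f_2\mid W$ and $f_2\mid r$, and $\sum_{r\le Q,\,f_2\mid r}1/\phi(r)\ll \log x/\phi(f_2)$). The new $W$ has about $\log N/(\log_2 N)^3$ prime factors, so there are many conductors $f$ dividing $W$ (hence with $f_2=1$) that exceed every fixed power of $\log x$: for these, Siegel--Walfisz/Page at level $(\log x)^C$ says nothing, and the large sieve gives no gain because the factor $1/F$ that drives the classical argument (coming from $1/\phi(f)$ on a dyadic range $f\sim F$) has been destroyed. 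So your claim that ``the Page zero-free region for conductors $\le(\log x)^C$ and the large sieve above that suffice'' is exactly the step that fails; handling these mostly-$W$ conductors is the real content of \cite[Lemma 2]{GPY06}, it requires the Landau--Page theorem at level $Z=\exp(5\log N/(\log_2 N)^2)$ minus the single exceptional character (this is where $p_{\bad}$ enters), and it is why the attainable saving there is $\exp(-c(\log_2 N)^2)$ rather than an arbitrary power of $\log x$. If you replace your sketched lemma by a citation of \cite[Lemma 2]{GPY06} with the coprimality to $p_{\bad}$ built in, the rest of your write-up (the $S_1$ analysis, the Cauchy--Schwarz step with $\sum_{r<R^2}\mu^2(r)\tau_{3k}(r)^2/\phi(r)$, and the $1+o(1)$ effect of the modified weights) matches the paper's proof.
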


Beyond Maynard's arguments \cite{maynard14} for Proposition \ref{prop:main-maynard}, the proof of Proposition \ref{prop:mm2} needs a version of the Bombieri--Vinogradov theorem due to Goldston--Pintz--Y{\i}ld{\i}r{\i}m (see \cite[Lemma 2]{GPY06}, and compare with \cite[Theorem 6]{GPY10}).

We adopt the notation $X_N \coloneqq \sum_{N \leq p < 2N} \chi_{\Pp}(n)$, and we let
\[ E(N;q) \coloneqq 1 + \max_{\substack{a\bmod{q} \\ (a,q)=1}} \bigg|\sum_{\substack{N \leq n < 2N \\ n\equiv a\pmod{q}}} \chi_{\Pp}(n) - \frac{X_N}{\phi(q)}\bigg|. \]
The following is a special case of the Goldston--Pintz--Y{\i}ld{\i}r{\i}m result.
\begin{prop}\label{prop:BV} Let $P$ be a positive integer with $P \leq \exp(2\log{N}/(\log_2{N})^2)$. Assume also that $p_{\bad}$ is coprime to $P$. Then
\[ \sum_{\substack{d \leq N^{1/2} \exp(-10\frac{\log{N}}{(\log_2{N})^2}) \\ (d,P \cdot p_{\bad})=1}} E(N; dP) \leq c_1 \frac{N}{P} \exp(-c_2 (\log_2{N})^2). \]
Here $c_1$ and $c_2$ are absolute positive constants.
\end{prop}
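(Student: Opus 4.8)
The plan is to deduce Proposition~\ref{prop:BV} from the Bombieri--Vinogradov-type theorem of Goldston--Pintz--Y{\i}ld{\i}r{\i}m \cite[Lemma~2]{GPY06} (see also \cite[Theorem~6]{GPY10}): Proposition~\ref{prop:BV} is the instance of that result in which the fixed auxiliary modulus is our $P$ and the exceptional modulus excluded from the sum is our $q_1$. The two points that make the \emph{statement} non-classical --- an error term beating every fixed power of $\log N$, and a full saving of the factor $P$ --- reflect two features of the \emph{proof}, which I now describe. Put $L := \log N/(\log_2 N)^2$ for brevity. First, a partial summation starting from $E(N;q)-1 \le \sum_{y\in\{N,2N\}}\max_{(a,q)=1}|\pi(y;q,a) - \pi(y)/\phi(q)|$ converts $\chi_{\Pp}$ to the von Mangoldt function, the prime powers and partial-summation tails contributing only $\ll N e^{-9L} \ll \frac NP\exp(-c_2(\log_2 N)^2)$; so it suffices to show
\[ \frac{1}{\log N}\sum_{\substack{d \le N^{1/2}e^{-10L}\\ (d,\,Pp_{\bad})=1}} \frac{1}{\phi(dP)}\sum_{\substack{\chi \bmod dP\\ \chi\neq\chi_0}} \max_{y\le 2N}|\psi(y,\chi)| \ll \frac NP\exp(-c(\log_2 N)^2). \]
(The ``$1+$'' in $E(N;q)$ adds $\le N^{1/2}e^{-10L}$, again of the required size.)

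This is a standard Bombieri--Vinogradov situation: Vaughan's identity expresses $\psi(y,\chi)$ as a bounded combination of Type~I and Type~II (bilinear) sums over dyadic boxes whose side-lengths multiply to $\asymp y$, after which one applies the large sieve for Dirichlet characters. The one twist is that every modulus $dP$ is a multiple of $P$; and the large sieve for characters of modulus divisible by $P$ and at most $Q$ satisfies the \emph{sharpened} inequality $\sum \ll (Q^2/P + M)\sum_n |a_n|^2$ for coefficient sequences of length $M$, since two Farey fractions $a_1/(Pr_1)$, $a_2/(Pr_2)$ with denominators $\le Q$ differ modulo $1$ by at least $1/(Pr_1r_2) \ge P/Q^2$. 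Taking $Q = D := PN^{1/2}e^{-10L}$, so that $Q^2/P = PN e^{-20L} \le N e^{-18L}$, and running this through the Vaughan decomposition, the Type~I and Type~II contributions to the displayed left-hand side amount to $\ll P^{1/2} N e^{-10L}(\log N)^{O(1)} = \frac NP \cdot P^{3/2} e^{-10L}(\log N)^{O(1)} \le \frac NP\, e^{-7L}(\log N)^{O(1)}$, and $e^{-7L}(\log N)^{O(1)} \ll \exp(-c(\log_2 N)^2)$ because $e^{-L}$ swamps every power of $\log N$ and $L/(\log_2 N)^2 \to \infty$.

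There remains the Siegel--Walfisz contribution of the characters $\chi \bmod dP$ of small conductor --- say at most $\exp(\sqrt{\log N})$, the range the large sieve does not profitably reach --- and this is exactly where the hypothesis $(dP,p_{\bad})=1$ enters, rescuing the effectivity that the classical argument lacks. If the Landau--Page exceptional modulus $q_1$ exists, its largest prime factor is $p_{\bad}$, and $q_1 > (\log Z)^2 > 8$ forces $p_{\bad}\mid q_1$; hence $q_1 \nmid dP$, so no character modulo $dP$ is induced by the exceptional character $\chi_1$. These small conductors all lie below $Z = \exp(5L)$ since $(\log_2 N)^2 = o(\sqrt{\log N})$, so each such $L(s,\chi)$ obeys the classical zero-free region and $\psi(y,\chi)\ll N\exp(-c\sqrt{\log N})$ with an effective constant; summed over the small-conductor characters with weight $1/(\phi(dP)\log N)$ this contributes $\ll \frac NP\exp(-c'\sqrt{\log N})$ (the $1/P$ present structurally when $P$ is large, and simply absorbed when $P$ is smaller than a power of $\log N$), which is $\ll \frac NP\exp(-c_2(\log_2 N)^2)$ as $\sqrt{\log N}$ dominates $(\log_2 N)^2$. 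Adding the Type~I, Type~II, and Siegel--Walfisz bounds proves the proposition. The single step needing genuine care --- and the reason one invokes \cite{GPY06} rather than reproving Bombieri--Vinogradov from scratch --- is verifying that the $P$-improved large sieve really carries a $1/P$ all the way through Vaughan's identity, rather than having it washed out in the reduction to primitive characters; granting that, the numerology above shows a full power of $\log N$ of slack at every stage.
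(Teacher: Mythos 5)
The paper offers no argument for Proposition \ref{prop:BV} beyond the citation: it is quoted as a special case of \cite[Lemma 2]{GPY06} (cf.\ \cite[Theorem 6]{GPY10}), which is the identification you make at the outset, so at that level you and the paper agree. The problem is with the sketch you then give of why the GPY result holds: the Siegel--Walfisz step is wrong. Removing only the Landau--Page character $\chi_1$ does \emph{not} restore an effective bound $\psi(y,\chi)\ll N\exp(-c\sqrt{\log N})$ for the remaining characters of conductor up to $\exp(\sqrt{\log N})$. The Landau--Page theorem only says that every character of conductor $\le Z=\exp(5\log N/(\log_2 N)^2)$ other than $\chi_1$ has no real zero in $[1-\tfrac{3}{\log Z},1]$; a non-exceptional real character of conductor, say, $\exp((\log_2 N)^{3})$ may perfectly well have a real zero $\beta$ just below $1-\tfrac{3}{\log Z}=1-\tfrac{3}{5}(\log_2 N)^2/\log N$, and the term $y^{\beta}/\beta$ in the explicit formula is then of size $N\exp(-c(\log_2 N)^2)$, not $N\exp(-c\sqrt{\log N})$. (Even ineffectively, Siegel's theorem does not give square-root-of-log savings for conductors as large as $\exp(\sqrt{\log N})$.) This is precisely why the error term in the proposition has the shape $\exp(-c_2(\log_2 N)^2)$ rather than the classical $\exp(-c\sqrt{\log N})$; your sketch misattributes its source.

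Moreover, even granting your (unavailable) pointwise bound, the bookkeeping in the small-conductor range does not close with the cutoff $\exp(\sqrt{\log N})$: grouping the moduli $dP$ by the inducing primitive character of conductor $r$, the weights $1/\phi(dP)$ reduce the count only to roughly $\sum_{r\le R_0} r/\phi(r)\asymp R_0$, so this range contributes about $(N/\phi(P))\,R_0\exp(-c(\log_2 N)^2)$, which is useless for $R_0=\exp(\sqrt{\log N})$. One must instead take $R_0$ of size $\exp(c'(\log_2 N)^2)$ with $c'$ wedged between $c_2$ and the zero-free-region constant coming from $Z$: small enough that the character count does not swallow the saving $\exp(-c(\log_2 N)^2)$, and large enough that the large-sieve ranges $r>R_0$ still give $\ll (N/P)\exp(-c_2(\log_2 N)^2)$ (a fixed power of $\log N$ will not do, since $\exp(-c_2(\log_2 N)^2)$ decays faster than any such power). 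This delicate balancing, tied to the particular constants $5$, $2$, $10$ in the definitions of $Z$ and the ranges of $P$ and $d$, is the real content of the GPY lemma. Your large-sieve observation (spacing $\ge P/Q^2$ for denominators divisible by $P$, hence the factor $Q^2/P+M$) is correct, and deferring the reduction-to-primitive-characters issue to \cite{GPY06} is legitimate; but as written, the treatment of the small conductors --- the one place where $p_{\bad}$ and the shape of the error term actually matter --- does not work.
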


\begin{proof}[Sketch of the proof of Proposition \ref{prop:mm2}] We use a nearly identical argument to that of \cite{maynard14}. The key difference occurs in the proof of \cite[Lemma 5.2]{maynard14}, where the Bombieri--Vinogradov theorem is used to bound
\begin{equation}\label{eq:needbound} \sum_{\substack{d_1, \dots, d_k \\ e_1, \dots, e_k \\ [d_i,e_i]~\text{pairwise coprime}}} |\lambda_{d_1,\dots, d_k}| |\lambda_{e_1, \dots, e_k}| \cdot E(N; W \prod_{i=1}^{k} [d_i, e_i]). \end{equation}
To continue with the argument, one requires an upper bound for this expression of size $o(\frac{\phi(W)^k}{W^{k+1}} N (\log{N})^k)$. The argument of \cite{maynard14} gives a  bound of $O(N/(\log{N})^A)$, for any fixed $A$. This is sufficient in \cite{maynard14}, since in that context $W=(\log{N})^{o(1)}$. However, our $W$ grows faster than any fixed power of $\log{N}$, which requires us to invoke Proposition \ref{prop:BV} in place of Bombieri--Vinogradov.

To estimate \eqref{eq:needbound}, we group the tuples of $d_i$ and $e_i$ according to the value of $r \coloneqq \prod_{i=1}^{k}[d_i, e_i]$. By the choice of sieve weights, $\lambda_{d_1,\dots,d_k}$ vanishes unless $d_1\cdots d_k$ is squarefree, smaller than $R$, and coprime to $W p_{\bad}$. Thus, we can assume $r < R^2$, that $r$ is squarefree, and that $\gcd(r,W p_{\bad})=1$. Since there are $\tau_{3k}(r)$ tuples of $d_i$ and $e_i$ with $\prod_{i=1}^{k}[d_i,e_i]=r$, we find that \eqref{eq:needbound} is
\[ \ll \lambda_{\max}^2 \sum_{\substack{r < R^2 \\ (r,W p_{\bad})=1}} \mu^2(r) \tau_{3k}(r) E(N;rW), \]
where $\lambda_{\max} \coloneqq \max_{d_1,\dots, d_k}=|\lambda_{d_1,\dots,d_k}|$.
From \cite[eqs. (5.9) and (6.3)]{maynard14}, $\lambda_{\max} \ll_{k,F} (\log{R})^k$. Invoking the trivial bound $E(N;rW) \ll N/\phi(rW)$ and applying Cauchy--Schwarz gives that \eqref{eq:needbound} is
\begin{multline*} \ll (\log{R})^{2k} \bigg(\frac{N}{\phi(W)}\sum_{\substack{r < R^2 \\ (r,W p_{\bad})=1}} \frac{\mu^2(r) \tau_{3k}(r)^2}{\phi(r)}\bigg)^{1/2} \bigg(\sum_{\substack{r < R^2 \\ (r,W p_{\bad})=1}} E(N,rW)\bigg)^{1/2}  \\
\ll \frac{N}{W} \exp(-c_3 (\log\log{N})^2) \end{multline*}
for a certain positive constant $c_3$. Here we applied Proposition \ref{prop:BV} with $P=W$ to estimate the sum of $E(N;rW)$, using that $W \leq \exp((1+o(1)) \log{N}/(\log_2{N})^2)$, that $\gcd(p_{\bad},W)=1$, and that $R^2 = N^{2\theta}$ with $2\theta < \frac{1}{2}$.

The remainder of the proof of Proposition \ref{prop:mm2} continues along the path laid out in \cite{maynard14}. Following the proofs of \cite[Lemmas 6.2, 6.3]{maynard14}, the estimates of $S_1$ and $S_2$ now pick up factors of size $(p_{\bad}/\phi(p_{\bad}))^{O_k(1)}$, owing to the slightly different definition of the sieve weights. However, since $p_{\bad}=1$ or $p_{\bad} \gg \log_2{N}$, these factors are of the shape $1+o(1)$, and so the asymptotic estimates of Proposition \ref{prop:main-maynard} remain intact.
\end{proof}
\subsection{Auxiliary estimates} We recall briefly how the sum $S_1$ is handled. First, one unfolds the definition of $w(n)$ and reverses the order of summation to obtain that \[ S_1 = \sum_{\substack{d_1, \dots, d_k \\ e_1, \dots, e_k}} \lambda_{d_1, \dots, d_k} \lambda_{e_1, \dots, e_k} \sum_{\substack{N \leq n < 2N \\ n \equiv \nu\pmod{W} \\ [d_i,e_i]\mid n+h_i~\forall i}} 1. \] Now $\lambda_{d_1,\dots, d_k}$ vanishes unless $(d_1 \cdots d_k,W)=1$. It follows that the tuples $(d_1,\dots, d_k)$ and $(e_1,\dots,e_k)$ make a vanishing contribution to the double sum unless $[d_1,e_1], \dots, [d_k, e_k]$, and $W$ are pairwise relatively prime.  In the event that this latter condition holds, the inner sum is $\frac{N}{W\prod_{i=1}^{k}[d_i,e_i]}+O(1)$. We thus find that $S_1$ is, up to easy to estimate error terms, given by
\[ \tilde{S}_1\coloneqq \frac{N}{W} \sideset{}{'}\sum_{\substack{d_1, \dots, d_k\\ e_1, \dots, e_k}}\frac{\lambda_{d_1, \dots, d_k} \lambda_{e_1, \dots, e_k}}{\prod_{i=1}^{k}[d_i, e_i]}, \]
where the $'$ on the sum indicates the condition that $[d_1,e_1], \dots, [d_k, e_k], W$ are pairwise coprime. 
In the proofs of the estimate for $S_1$ in Proposition \ref{prop:main-maynard} and Proposition \ref{prop:mm2}, what is actually shown is that
\[ \tilde{S}_1 \sim \frac{\phi(W)^k}{W^{k+1}} N (\log{R})^k I_k(F). \]
We will need to estimate variants of the sum $\tilde{S}_1$ where extra coprimality conditions are imposed on the $d_i$ and $e_i$.
\begin{lem}\label{lem:tilde1} Let $q > 1$ be an integer coprime to $W$. Then
\begin{equation}\label{eq:l23lhs}\frac{N}{W} \sum_{\substack{d_1,\dots, d_k,  e_1, \dots, e_k \\ W,~[d_i, e_i],~q \text{ pairwise coprime}}}\frac{\lambda_{d_1, \dots, d_k} \lambda_{e_1, \dots, e_k}}{\prod_{i=1}^{k}[d_i, e_i]} = \tilde{S}_1 + O\left(\frac{1}{p_{\min}(q)} \cdot \frac{\phi(W)^k}{W^{k+1}} N (\log{N})^{k}\right). \end{equation}
Here $p_{\min}(q)$ denotes the smallest prime factor of $q$, and the implied constant may depend on $k$, $F$, and the number of distinct primes dividing $q$.
\end{lem}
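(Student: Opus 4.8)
The plan is to write the left-hand side of \eqref{eq:l23lhs} as $\tilde{S}_1$ minus a correction sum, and to estimate the correction by rerunning the analysis of $\tilde{S}_1$ on a restricted, sparse range of summation. Since $\lambda_{d_1,\dots,d_k}$ already vanishes unless $(d_1\cdots d_k,W)=1$ (and likewise for the $e_i$), and since $(q,W)=1$, the condition ``$W$, $[d_i,e_i]$, $q$ pairwise coprime'' differs from ``$W$, $[d_i,e_i]$ pairwise coprime'' precisely by the extra requirement $\gcd(q,\prod_{i}[d_i,e_i])=1$. Hence the left-hand side of \eqref{eq:l23lhs} equals $\tilde{S}_1-D$, where
\[ D\coloneqq \frac{N}{W}\sum_{\substack{d_1,\dots,d_k,\ e_1,\dots,e_k\\ W,\ [d_i,e_i]\ \text{pairwise coprime}\\ \gcd(q,\ d_1 e_1\cdots d_k e_k)>1}}\frac{\lambda_{d_1,\dots,d_k}\,\lambda_{e_1,\dots,e_k}}{\prod_{i=1}^{k}[d_i,e_i]}. \]
In every tuple contributing to $D$ some prime $p\mid q$ (necessarily with $p\nmid W$) divides one of the $d_i$ or $e_i$, and such tuples form a subset of the summation range of relative density $O(\omega(q)/p_{\min}(q))$; running the $\tilde{S}_1$-analysis on this range should therefore lose exactly this factor.

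To make this quantitative I would diagonalize $D$ in the same way that $\tilde{S}_1$ is diagonalized. Writing $y_{r_1,\dots,r_k}\coloneqq \mu(r_1\cdots r_k)^2 F\!\left(\tfrac{\log r_1}{\log R},\dots,\tfrac{\log r_k}{\log R}\right)$, equation \eqref{eq:lambdadef} becomes $\lambda_{d_1,\dots,d_k}=\bigl(\prod_i\mu(d_i)d_i\bigr)\sum_{d_i\mid r_i,\ (r_i,W)=1}y_{r_1,\dots,r_k}/\prod_i\phi(r_i)$. Substituting this for both factors $\lambda$ and carrying out the inner sum over the $d_i,e_i$ using the identity $\sum_{d,e\mid r}\mu(d)\mu(e)de/[d,e]=\phi(r)$ for squarefree $r$ together with the variant $\sum_{d,e\mid r,\ (q,[d,e])=1}\mu(d)\mu(e)de/[d,e]=\phi(r/\gcd(r,q^\infty))$, one finds that the extra coprimality constraint changes the local factor only at the primes dividing $q$. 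A short computation then shows that the main (diagonal, ``unentangled'') part of $D$ is, up to an absolutely bounded factor,
\[ \ll\ \frac{N}{W}\sum_{\substack{r_1,\dots,r_k\\ \gcd(q,\,r_1\cdots r_k)>1}}\frac{y_{r_1,\dots,r_k}^2}{\prod_{i=1}^{k}\phi(r_i)}. \]
Bounding $|y_{r_1,\dots,r_k}|\ll_F 1$, applying the union bound $\gcd(q,r_1\cdots r_k)>1\Rightarrow\exists\,p\mid q,\ \exists\, i:\ p\mid r_i$, dropping the joint constraint $r_1\cdots r_k\le R$, and using the elementary estimates
\[ \sum_{\substack{r<R,\ \mu^2(r)=1\\ (r,W)=1,\ p\mid r}}\frac{1}{\phi(r)}\ \ll\ \frac{1}{p-1}\cdot\frac{\phi(W)}{W}\log R\qquad\text{and}\qquad \sum_{\substack{r<R,\ \mu^2(r)=1\\ (r,W)=1}}\frac{1}{\phi(r)}\ \ll\ \frac{\phi(W)}{W}\log R, \]
the quantity above is $\ll_{k,F}\bigl(\sum_{p\mid q}\tfrac1{p-1}\bigr)\frac{\phi(W)^k}{W^{k+1}}N(\log R)^k$. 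Since $\sum_{p\mid q}1/(p-1)\le 2\omega(q)/p_{\min}(q)$ and $\log R<\log N$, this is $\ll_{k,F,\omega(q)}\frac{1}{p_{\min}(q)}\cdot\frac{\phi(W)^k}{W^{k+1}}N(\log N)^k$, which is the desired size.

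The only genuinely delicate point is the treatment of the error terms in the diagonalization of $D$: the contribution of tuples with $(r_1,\dots,r_k)\neq(s_1,\dots,s_k)$, the ``entangled'' tuples (a prime dividing $d_i$ and $e_j$ with $i\neq j$), and the $O(1)$ error incurred in counting $n\in[N,2N)$ in each residue class. Each of these is bounded exactly as in \cite{maynard14} and Proposition \ref{prop:mm2}; what must be checked is that, because in $D$ these sums are further restricted to tuples with a prime factor of $q$ dividing some $d_i$ or $e_i$, the restriction supplies an extra factor $\ll\omega(q)/p_{\min}(q)$ over the (already negligible) errors in the estimate for $\tilde{S}_1$. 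Granting this, all of these error contributions are $o\!\left(\frac{1}{p_{\min}(q)}\cdot\frac{\phi(W)^k}{W^{k+1}}N(\log N)^k\right)$, and combining with the main-term bound of the previous paragraph completes the proof. I expect this last bookkeeping --- verifying that Maynard's error estimates really do lose a factor $\asymp 1/p$ when one coordinate's modulus is forced to be divisible by $p$ --- to be the most laborious step, but it is routine given the multiplicative structure of those bounds.
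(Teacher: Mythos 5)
Your decomposition of the left-hand side as $\tilde S_1 - D$ and your estimate of $D$ via Maynard's change of variables to the $y$'s, the bound $y_{\max}=O(1)$, and the extraction of a factor $\frac{1}{p_{\min}(q)-1}$ from the restricted sums $\sum_{r<R,\ (r,W)=1,\ p\mid r}\mu^2(r)/\phi(r)$ (with constants allowed to depend on $k$, $F$, $\omega(q)$) is essentially the paper's own proof, which detects the coprimality to $q$ by M\"obius variables $\delta_i,\epsilon_j\mid q$ and bounds every term with some $\delta_i$ or $\epsilon_j>1$ by exactly this mechanism. One minor correction: the lemma compares two sums over the $d_i,e_i$ alone, so there is no ``$O(1)$ per residue class'' counting error to treat; apart from that bookkeeping remark, your outline matches the paper's argument.
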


\begin{remark} In particular, the left-hand side of \eqref{eq:l23lhs} is $O(\frac{\phi(W)^k}{W^{k+1}} N(\log{N})^k)$.
\end{remark}

\begin{proof} We will assume throughout that $q$ is squarefree; otherwise, we may replace $q$ by the product of its distinct prime factors. To estimate the left-hand side of \eqref{eq:l23lhs}, we borrow heavily from Maynard's proof of \cite[Lemma 5.1]{maynard14}. Recall that $\lambda_{d_1, \dots, d_k}$ vanishes unless $d_1\cdots d_k$ is squarefree and coprime to $W$. Thus, we may drop the requirement that each $d_i$ is prime to $W$ and the requirement that the $d_i$ are pairwise coprime, since the exceptional terms make no contribution. The same comment applies, of course, to the $e_i$. Thus, the only surviving conditions on the sum are that each $(d_i,e_j)=1$ and that each $d_i$ and $e_j$ is prime to $q$.  We incorporate the restrictions that $(d_i, e_j)=1$ by multiplying through by $\sum_{s_{i,j} \mid d_i, e_j} \mu(s_{i,j})$ for $i \neq j$. Similarly, we incorporate the restrictions that $(d_i,q)=(e_j,q)=1$ by multiplying through by $\sum_{\delta_i \mid d_i, q} \mu(\delta_i)$ and $\sum_{\epsilon_j \mid e_j, q} \mu(\epsilon_j)$, for all pairs of $i$ and $j$. Finally, we rewrite
\[ \frac{1}{[d_i, e_i]} = \frac{1}{d_i e_i} \sum_{u_i \mid d_i, e_i} \phi(u_i). \]
These transformations put the left-hand side of \eqref{eq:l23lhs} in the form
\begin{multline*} \frac{N}{W} \sum_{u_1, \dots, u_k} \left(\prod_{i=1}^{k} \phi(u_i)\right) \sum_{s_{1,2}, \dots, s_{k,k-1}} \left(\prod_{\substack{1 \leq i, j \leq k \\ i \neq j}} \mu(s_{i,j})\right) \sum_{\substack{\delta_1, \dots, \delta_k \mid q\\ \epsilon_1, \dots, \epsilon_k \mid q}}\left(\prod_{i=1}^{k} \mu(\delta_i) \prod_{j=1}^{k} \mu(\epsilon_j)\right) \\
\times \sum_{\substack{d_1,\dots, d_k \\ e_1, \dots, e_k \\ u_i \mid d_i, e_i\,\forall i \\ s_{i,j}\mid d_i, e_j\,\forall i\neq j\\ \delta_i \mid d_i, \epsilon_j \mid e_j\, \forall i, j}} \frac{\lambda_{d_1,\dots, d_k} \lambda_{e_1, \dots, e_k}}{\prod_{i=1}^{k} d_i e_i}.
\end{multline*}
To proceed, we follow Maynard in making the change of variables
\[ y_{r_1, \dots, r_k} = \left(\prod_{i=1}^{k} \mu(r_i) \phi(r_i)\right) \sum_{\substack{d_1, \dots, d_k \\ r_i \mid d_i~\forall i}} \frac{\lambda_{d_1, \dots, d_k}}{\prod_{i=1}^{k} d_i}. \]
Note that $y_{r_1, \dots, r_k}$ vanishes unless $r_1\cdots r_k$ is squarefree, prime to $W$, and smaller than $R = N^{\theta}$. This change of variables allows us to rewrite the sum in the form
\begin{multline}\label{eq:tilde1rewritten} \frac{N}{W} \sum_{u_1, \dots, u_k} \left(\prod_{i=1}^{k} \phi(u_i)\right) \sideset{}{^*}\sum_{s_{1,2}, \dots, s_{k, k-1}} \left(\prod_{\substack{1 \leq i, j \leq k \\ i \neq j}} \mu(s_{i,j})\right) \sum_{\substack{\delta_1, \dots, \delta_k \mid q \\ \epsilon_1, \dots, \epsilon_k \mid q}} \left(\prod_{i=1}^{k} \mu(\delta_i) \prod_{j=1}^{k}\mu(\epsilon_j)\right) \\ \times \Bigg(\prod_{i=1}^{k} \frac{\mu(a_i)}{\phi(a_i)}\Bigg) \Bigg(\prod_{j=1}^{k} \frac{\mu(b_j)}{\phi(b_j)}\Bigg) y_{a_1, \dots, a_k} y_{b_1, \dots, b_k}, \end{multline}
where \[ a_i  \coloneqq \lcm[u_i \prod_{j \neq i} s_{i,j}, \delta_i] \quad\text{and}\quad b_j \coloneqq \lcm[u_j \prod_{i \neq j} s_{i,j}, \epsilon_j].\] Here the $*$ on the sum indicates that $s_{i,j}$ is restricted to be coprime to $u_i$, $u_j$, $s_{i,a}$, and $s_{b,j}$ for all $a \neq j$ and $b\neq i$; the other $s_{i,j}$ make no contribution. The contribution to \eqref{eq:tilde1rewritten} from the terms where $\delta_i = \epsilon_j=1$ for all $i, j$ is precisely equal to $\tilde{S}_1$. (To see this, think of transforming $\tilde{S}_1$ the same way we transformed the left-hand side of \eqref{eq:l23lhs}; alternatively, compare with eq. (5.10) of \cite{maynard14}.) The total number of possibilities for $\delta_i$ and $\epsilon_j$ is bounded in terms of the number of distinct prime factors of $q$. We complete the proof by showing that any choice of $\delta_i$ and $\epsilon_j$ having at least one of the variables larger than $1$ makes a contribution
\begin{equation}\label{eq:acceptablecontribution} \ll \frac{1}{p_{\min}(q)} \cdot \frac{\phi(W)^k}{W^{k+1}}\cdot N (\log{N})^{k}. \end{equation}
Consider any such choice of the $\delta$'s and $\epsilon$'s. For definiteness, we suppose that $\delta_1 > 1$; the other cases are entirely similar. For each term in \eqref{eq:tilde1rewritten} corresponding to this choice, we define $\delta_i'$ and $\epsilon_j'$ by the equations
\[ a_i = \Bigg(u_i \prod_{j \neq i} s_{i,j}\Bigg) \delta_i', \qquad b_j = \Bigg(u_j \prod_{i \neq j} s_{i,j}\Bigg) \epsilon_j'. \]
Since $q$ is squarefree, we get that $\delta_i'$ is a unitary divisor of $a_i$ and $\epsilon_j'$ is a unitary divisor of $b_j$. Thus, we can write $\mu(a_i) = \left(\mu(u_i) \prod_{j \neq i} \mu(s_{i,j})\right) \mu(\delta_i')$, and similarly for $\mu(b_j)$, $\phi(a_i)$, and $\phi(b_j)$. Making these substitutions, we see that the contribution to \eqref{eq:tilde1rewritten} from our choice of $\delta$'s and $\epsilon$'s is given by
\begin{multline*} \frac{N}{W} \sum_{u_1, \dots, u_k} \left(\prod_{i=1}^{k} \frac{\mu(u_i)^2}{\phi(u_i)} \right) \sideset{}{^*}\sum_{s_{1,2}, \dots, s_{k, k-1}}\left(\prod_{\substack{1 \leq i, j \leq k\\i \neq j}} \frac{\mu(s_{i,j})}{\phi(s_{i,j})^2}\right) \\ \times \left(\prod_{i=1}^{k} \frac{\mu(\delta_i)\mu(\delta_i')}{\phi(\delta_i')} \prod_{j=1}^{k}\frac{\mu(\epsilon_j) \mu(\epsilon_j')}{\phi(\epsilon_j')}\right) y_{a_1, \dots, a_k} y_{b_1, \dots, b_k}. \end{multline*}
Now replace all terms by their absolute values. Whenever $\delta_1' > 1$, we have $\prod_{i}\frac{1}{\phi(\delta_i')} \prod_{j}\frac{1}{\phi(\epsilon_j')} \leq \frac{1}{p_{\min}(\delta_1')-1} \ll \frac{1}{p_{\min}(q)}$. So the terms with $\delta_1' > 1$ give us a contribution that is
\[ \ll \frac{y_{\max}^2}{p_{\min}(q)} \frac{N}{W} \Bigg(\sum_{\substack{u < R \\ (u,W)=1}}\frac{\mu(u)^2}{\phi(u)}\Bigg)^{k} \Bigg(\sum_{s} \frac{\mu(s)^2}{\phi(s)^2}\Bigg)^{k(k-1)} \ll \frac{y_{\max}^2}{p_{\min}(q)} \cdot \frac{\phi(W)^k}{W^{k+1}} \cdot N (\log{N})^{k},\]
where $y_{\max} = \max_{r_1, \dots, r_k} |y_{r_1, \dots, r_k}|$. Suppose instead that $\delta_1'=1$. Then $p_{\min}(\delta_1)$ divides either $u_1$ or some $s_{1,j}$ with $1 < j \leq k$. The terms corresponding to the former case contribute
\begin{multline*} \ll y_{\max}^2 \frac{N}{W}   \Bigg(\sum_{\substack{u < R/p_{\min}(\delta_1) \\ (u, p_{\min}(\delta_1) \cdot W)=1}}\frac{\mu(u)^2}{\phi(u) \phi(p_{\min}(\delta_1))}\Bigg) \Bigg(\sum_{\substack{u < R \\ (u,W)=1}}\frac{\mu(u)^2}{\phi(u)}\Bigg)^{k-1} \Bigg(\sum_{s} \frac{\mu(s)^2}{\phi(s)^2}\Bigg)^{k(k-1)} \\ \ll \frac{y_{\max}^2}{p_{\min}(q)} \cdot \frac{\phi(W)^k}{W^{k+1}} \cdot N (\log{N})^k;\end{multline*}
we used here the trivial bounds $\prod_{i} \frac{1}{\phi(\delta_i')} \prod_{j} \frac{1}{\phi(\epsilon_j')} \leq 1$ and $p_{\min}(\delta_1) \geq p_{\min}(q)$. Finally, the terms where $p_{\min}(\delta_1)$ divides $s_{1,j}$ for some $j$ contribute
\begin{multline*} \ll y_{\max}^2 \frac{N}{W}  \Bigg(\sum_{\substack{u < R \\ (u,W)=1}}\frac{\mu(u)^2}{\phi(u)}\Bigg)^{k} \Bigg(\sum_{s} \frac{\mu(p_{\min}(\delta_1) s)^2}{\phi(p_{\min}(\delta_1) s)^2}\Bigg) \Bigg(\sum_{s} \frac{\mu(s)^2}{\phi(s)^2}\Bigg)^{k(k-1)-1} \\ \ll \frac{y_{\max}^2}{p_{\min}(q)^2} \cdot \frac{\phi(W)^k}{W^{k+1}} \cdot N (\log{N})^k. \end{multline*}
We recall that the choice of $\lambda$'s was made so that whenever $y_{r_1,\dots, r_k}\neq 0$, we have $y_{r_1,\dots,r_k} = F(\frac{\log{r_1}}{\log{R}},\dots,\frac{\log{r_k}}{\log{R}})$; see \cite[eq. (6.3)]{maynard14}. In particular, $y_{\max}=O(1)$. Now combining the last three displayed estimates yields \eqref{eq:acceptablecontribution}.
\end{proof}

We will also need the following upper bound for $w(n)$ in mean square, which appears as \cite[Lemma 3.5]{pollack14}. While this lemma was originally proved in the context of Proposition \ref{prop:main-maynard}, the same argument goes through with the modified definitions of Proposition \ref{prop:mm2}.

\begin{lem}\label{lem:csprep} Assume $\theta$ and $F$ satisfy the conditions of Proposition \ref{prop:main-maynard}. We have
	\[ \sum_{\substack{N \leq n < 2N \\ n \equiv \nu\pmod{W}}} w(n)^2 \ll_{k, \theta, F} \frac{N}{W} (\log{R})^{19k}.  \]
\end{lem}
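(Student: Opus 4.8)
The plan is to bound the mean square of $w(n)$ by expanding the square, reducing to a sum over the $\lambda$'s weighted by a count of $n \in \Omega$ divisible by prescribed moduli, and then transferring to the $y$-variables exactly as in the treatment of $\tilde S_1$. First I would unfold the definition \eqref{eq:wndef} twice, writing
\[
w(n)^2 = \Bigg(\sum_{d_i \mid n+h_i\,\forall i}\lambda_{d_1,\dots,d_k}\Bigg)^2\Bigg(\sum_{e_i \mid n+h_i\,\forall i}\lambda_{e_1,\dots,e_k}\Bigg)^2,
\]
so that after interchanging the order of summation the left-hand side of the lemma becomes a sum over four tuples $(d^{(1)}_i), (d^{(2)}_i), (e^{(1)}_i), (e^{(2)}_i)$ of the product of the corresponding $\lambda$'s times $\#\{N\le n<2N: n\equiv\nu\pmod W,\ [d^{(1)}_i,d^{(2)}_i,e^{(1)}_i,e^{(2)}_i]\mid n+h_i\ \forall i\}$. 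Since each $\lambda$ vanishes unless the relevant product is squarefree and coprime to $W$ (and to $p_{\bad}$, in the setting of Proposition \ref{prop:mm2}), only tuples for which $W$ and the $k$ least common multiples $\ell_i\coloneqq \lcm[d^{(1)}_i,d^{(2)}_i,e^{(1)}_i,e^{(2)}_i]$ are pairwise coprime survive; for those, the inner count is $\frac{N}{W\prod_i \ell_i}+O(1)$. The $O(1)$ error, summed trivially over the four tuples (each ranging over divisors of integers $<R$), contributes $\ll R^8 \le N^{8\theta}$, which is $o(N/W)$ since $\theta<\tfrac14$; so the main term is $\frac{N}{W}\sum \frac{\lambda\lambda\lambda\lambda}{\prod_i \ell_i}$.

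Next I would estimate this main term by the same change of variables used for $\tilde S_1$. Writing $\frac{1}{\ell_i}$ as a sum over $\phi$ of the common divisors of the four $i$-th components, inserting Möbius sums to detect the cross-coprimality conditions $(d^{(a)}_i, d^{(b)}_j)=1$ for $i\ne j$, and then passing to the variables $y_{r_1,\dots,r_k}$ (recalling that $y_{\max}=O(1)$ by \cite[eq.~(6.3)]{maynard14}), one reduces to bounding a multidimensional sum of reciprocals of $\phi$. The crude bound $\sum_{u<R,\,(u,W)=1}\mu^2(u)/\phi(u) \ll \frac{W}{\phi(W)}\log R$ on each $u$-summation, together with the convergent sums $\sum_s \mu^2(s)/\phi(s)^2 = O(1)$ on each cross-variable, yields a factor $(\frac{W}{\phi(W)})^{O(k)}(\log R)^{O(k^2)}$; one also picks up at most $O(k^2)$ further $u$-type summations from the quadruple (rather than double) structure, but each still contributes only $O(\frac{W}{\phi(W)}\log R)$. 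Tracking the total exponent of $\log R$ carefully gives an admissible power; the crude accounting of $19k$ in the statement has plenty of room, so I would not optimize.

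The main obstacle is not any single deep input — every ingredient is present — but rather the bookkeeping of the exponent of $\log R$ through the quadruple product and the change of variables, and making sure the $(\frac{W}{\phi(W)})^{O(k)}$ factor is absorbed correctly (here using $\frac{W}{\phi(W)}\ll \log_4 N$ for Proposition \ref{prop:main-maynard}, or $\frac{W}{\phi(W)}\ll \log_3 N$ in the setting of Proposition \ref{prop:mm2}, each of which is $(\log N)^{o(1)}$ and hence harmless against the power of $\log R$). Since the lemma only asserts an upper bound with a generous exponent, I would keep all estimates crude: replace every $E$-type count by its trivial majorant, bound each $\phi(u)^{-1}$ summation by $\ll \frac{W}{\phi(W)}\log R$, and simply count that the number of $u$- and $s$-type summation variables is $O(k^2)$, concluding that the whole expression is $\ll \frac{N}{W}(\log R)^{O(k)}$ with an absolute implied constant in the exponent, which is $\le 19k$ for all large $k$ and (after adjusting the implied constant) for all $k$. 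This is precisely the argument of \cite[Lemma 3.5]{pollack14}, and as remarked there, the passage to the modified weights of Proposition \ref{prop:mm2} changes nothing beyond replacing $W$ by its new value and inserting harmless factors $(p_{\bad}/\phi(p_{\bad}))^{O_k(1)} = 1+o(1)$.
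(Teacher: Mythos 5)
You have identified the right outline (expand $w(n)^2$ into a quadruple sum over the $\lambda$'s, split off the $O(1)$-per-term error, and bound the main term crudely), and indeed the paper itself offers no proof, simply citing \cite[Lemma 3.5]{pollack14} and remarking that the argument survives the modified definitions of Proposition \ref{prop:mm2}. But two of your steps are wrong as written, and your bookkeeping does not actually deliver the stated exponent. First, the error term: you bound it by $R^8\le N^{8\theta}$ and call this $o(N/W)$ ``since $\theta<\tfrac14$''; but $8\theta$ can be arbitrarily close to $2$, so $N^{8\theta}$ is in general far larger than $N/W$. The trivial count is in fact much better: every tuple in the support of $\lambda$ has $\prod_i d_i<R$ and $|\lambda|\le\lambda_{\max}\ll(\log R)^k$, so the error is $\ll \lambda_{\max}^4\bigl(\sum_{r<R}\tau_k(r)\bigr)^4\ll R^4(\log R)^{O(k)}=N^{4\theta+o(1)}$, and it is $4\theta<1$, i.e.\ $\theta<\tfrac14$, that saves you. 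Second, the main term: the identity $1/[d_i,e_i]=(d_ie_i)^{-1}\sum_{u\mid (d_i,e_i)}\phi(u)$ and Maynard's passage to the $y$-variables diagonalize a \emph{quadratic} form in the $\lambda$'s; there is no four-variable analogue of that identity (test $d^{(1)}_i=d^{(2)}_i=p$, $e^{(1)}_i=e^{(2)}_i=1$), and the $y$-substitution does not apply ``exactly as in the treatment of $\tilde{S}_1$'' to a quartic form, so the mechanism you describe breaks down at this point.

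Since only an upper bound is claimed, the repair is to be cruder rather than cleverer: bound all four $|\lambda|$'s by $\lambda_{\max}\ll(\log R)^k$, and group the quadruples according to the squarefree values $\ell_i$. For squarefree $\ell_i$ there are at most $\tau_{15}(\ell_i)=15^{\omega(\ell_i)}$ quadruples of divisors with least common multiple $\ell_i$, and $\sum_{\ell<R^4,\,(\ell,W)=1}\mu^2(\ell)\tau_{15}(\ell)/\ell\ll\bigl(\tfrac{\phi(W)}{W}\log R\bigr)^{15}$, so the main term is $\ll \tfrac{N}{W}(\log R)^{4k}\cdot(\log R)^{15k}=\tfrac{N}{W}(\log R)^{19k}$. (Equivalently: $w(n)\le\lambda_{\max}^2\prod_i 4^{\omega(n+h_i)}$, hence $w(n)^2\le\lambda_{\max}^4\prod_i16^{\omega(n+h_i)}$, and the mean of $16^{\omega}$ contributes a factor $(\log)^{15}$ per shift; this is exactly where $19k=4k+15k$ comes from, so there is far less ``room'' in the exponent than you suppose.) Finally, your closing accounting --- an exponent $O(k)$, or even $O(k^2)$ extra $u$-type summations each costing a $\log R$, to be fixed ``after adjusting the implied constant'' --- is not legitimate: for fixed $k$ an extra power of $\log R$ cannot be absorbed into an implied constant, so an unspecified exponent $Ck$ (let alone $Ck^2$) does not prove the bound with exponent $19k$. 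With the grouping above the argument closes and, with the $p_{\bad}$ and enlarged $W$ of Proposition \ref{prop:mm2}, adapts exactly as you say.
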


\section{Proof of Theorem \ref{thm:main}} While the proof of Theorem \ref{thm:main} takes different forms depending on the specific arithmetic function under consideration, certain parameters will remain fixed throughout. Given $K$, we fix $k \geq 2$ with $\lceil \frac{1}{4} M_k\rceil > K-1$. This choice of $k$ guarantees that there is a $\theta < \frac{1}{4}$ and a function $F$ so that the limit of $S_2/S_1$, as $N\to\infty$, exceeds $K-1$. We will keep $K$, $k$, $\theta$, and $F$ fixed throughout this section. Implied constants may depend on any of these parameters without further mention.

\subsection{The Euler $\phi$-function} In this section and the next, we will only be applying Proposition \ref{prop:main-maynard}, and so we assume that $W$ and $\lambda$'s are defined as in the statement of that result.

Our arguments use a probabilistic interpretation of Proposition \ref{prop:main-maynard}. Let $\Hh$ be a fixed admissible tuple. For each large $N$, we view $\Omega$ as a finite probability space where the probability mass at each $n_0$ is given by $w(n_0)/\sum_{n \in \Omega}w(n)$. Then $X\coloneqq\sum_{i=1}^{k} \chi_{\Pp}(n+h_i)$ can be viewed as a random variable on $\Omega$. (Of course, $\Omega$ and $X$ depend on $N$, but we suppress this in the notation.) Proposition \ref{prop:main-maynard} shows that $\E[X]\to S_2/S_1$, as $N\to\infty$. Since $X$ only assumes values in $\{0, 1, 2, \dots, k\}$, our choices of $K, k, \theta$, and $F$ give us that
	\begin{equation}\label{eq:proboflotsofprimes} \Prob(X\geq K) \geq \E[\frac{1}{k}(X-(K-1))] = \frac{1}{k}\left(\E[X]-(K-1)\right)\gg 1,\end{equation}
for large values of $N$. In other words, the probability that at least $K$ of $n+h_1, \dots, n+h_k$ are prime is bounded away from zero.

These results hold for any admissible tuple $\Hh$ and for any choice of $\nu\bmod{W}$ satisfying \eqref{eq:basicnu}. By tweaking these choices, we can bias the sieve to detect long tuples of consecutive primes $p$ on which $\phi(p-1)$ is monotone.

We treat the decreasing case first. In that case, it is convenient to work with the fixed tuple $\Hh$ specified by
\begin{equation}\label{eq:hidef} h_i = (i-1) (2k)!. \end{equation}
It is easy to see that this is admissible. With this choice of $\Hh$, we will describe how to pick the residue class $\nu\bmod{W}$ to satisfy, in addition to \eqref{eq:basicnu}, the following two constraints:
\begin{enumerate}
\item[(1)] For $n\in \Omega$, any prime between $n+h_1$ and $n+h_k$ is one of $n+h_1,\dots, n+h_k$.
\item[(2)] With probability $1+o(1)$ (as $N\to\infty$), we have $\frac{n+h_i-1}{\phi(n+h_i-1)} \in (2^{4i}, 2^{4i+3}]$ for all $1 \leq i \leq k$.
\end{enumerate}
From (1), (2), and \eqref{eq:proboflotsofprimes}, there is --- once $N$ is large enough --- a positive probability that an $n\in \Omega$ satisfies all of	
\begin{itemize}
\item at least $K$ of $n+h_1, \dots, n+h_k$ are prime,
\item the primes in the list $n+h_1, \dots, n+h_k$ exhaust the list of primes between $n+h_1$ and $n+h_k$,
\item each ratio $\frac{n+h_i-1}{\phi(n+h_i-1)} \in (2^{4i}, 2^{4i+3}]$.
\end{itemize}
Now the intervals $(2^{4i}, 2^{4i+3}]$ are disjoint since each is strictly to the right of the previous. Moreover, as $N\to\infty$, the numerators in the fractions $\frac{n+h_i-1}{\phi(n+h_i-1)}$ are all asymptotic to each other. So if $n \in \Omega$ has these three properties (and $N$ is large enough), then the set of primes $p$ between $n+h_1$ and $n+h_k$ (inclusive) has at least $K$ elements, and $\phi(p-1)$ is decreasing on this set.

It remains to choose $\nu$ and to prove that $\nu$ has the properties (1) and (2).

\subsubsection*{Choice of $\nu\bmod{W}$}
We pick $\nu\bmod{W}$ by choosing $\nu\bmod{p}$ for each prime $p \leq \log_3{N}$. We begin by selecting $\nu \equiv 1\pmod{2}$.
For an odd prime $p$, we say that a \emph{default selection of $\nu\bmod{p}$} is any $\nu$ with
\[ \nu \not\equiv h_1, \dots, h_k, 1-h_1, \dots, 1-h_k\pmod{p}. \]
There is always at least one way to make a default selection of $\nu$: For $p > 2k$, we can certainly avoid the at most $2k$ residue classes $h_1, \dots, h_k, 1-h_1, \dots, 1-h_k \bmod{p}$. If $p \leq 2k$, then $p$ divides every $h_i$, and we only have to avoid the two residue classes $0$ and $1$ modulo $p$.

We choose $\nu$ mod $p$ in different ways depending on the size of the odd prime $p$:
\begin{enumerate}
	\item[Range I.] $p \leq \log_4{N}$: For each $p$ in this range, make a default selection of $\nu$ for $p$.
	\item[Range II.] $\log_4{N} < p \leq \frac{1}{2}\log_3{N}$: Use the greedy algorithm to choose disjoint sets of primes $\Pp_i$, $1 \leq i \leq k$, satisfying
	\[ 4i \log{2} \leq \log{2} + \sum_{p \in \Pp_i}\log \frac{p}{\phi(p)} \leq (4i+1) \log{2}. \]
	For $p \in \Pp_i$, choose $\nu \equiv 1-h_i\pmod{p}$. For $p$ in this range not belonging to any of the $\Pp_i$, make a default selection.
	\item[Range III.] $\frac{1}{2}\log_3{N} < p \leq \log_3{N}$: For each even number $h\not\in\Hh$ that lies between $h_1$ and $h_k$, we choose a distinct prime $p^{(h)}$ from this range. We then choose $\nu \equiv -h\pmod{p^{(h)}}$ for each of these $h$. For the remaining values of $p$, we make a default selection of $\nu$.
\end{enumerate}

We leave to the reader the straightforward verification that the necessary condition \eqref{eq:basicnu} is satisfied for this choice of $\nu$.

\subsubsection*{Verification of (1) and (2)} From our selections in Range III, if $h$ is between $h_1$ and $h_k$ but not in $\Hh$, then $n+h$ has a known prime divisor --- either $p=2$ if $h$ is odd or the prime $p^{(h)} \le \log_3{N}$ if $h$ is even. So $n+h$ is necessarily composite. Thus, condition (1) above is satisfied with this choice of $\nu$.

We now turn to condition (2). Fix $1 \leq i \leq k$. Suppose $n\in\Omega$, i.e., $n \equiv \nu\pmod{W}$ with $N \leq n < 2N$. Our selections in Ranges I and II show that the primes not exceeding $\log_3{N}$ that divide $n+h_i-1$ are precisely $2$ and the primes in $\Pp_i$. So by our choice of $\Pp_i$,
\[ 2^{4i} \leq \frac{n+h_i-1}{\phi(n+h_i-1)} \leq 2^{4i+1} \prod_{\substack{p \mid n+h_i-1 \\ \log_3{N} < p \leq \log{N}}} \frac{p}{\phi(p)} \prod_{\substack{p \mid n+h_i-1 \\ p > \log{N}}} \frac{p}{\phi(p)}. \]
For the second product, we have the easy estimate
\begin{align*} \prod_{\substack{p \mid n+h_i-1 \\ p > \log{N}}} \frac{p}{\phi(p)} &\leq \left(1 + \frac{1}{\log{N}-1}\right)^{\log(3N)/\log_2{N}} \\&\leq 1 + O(1/\log_2{N}), \end{align*}
using that the integer $n+h_i-1\leq 3N$ has at most $\log(3N)/\log_2{N}$ prime factors exceeding $\log{N}$. Thus, once $N$ is large,
\[ 2^{4i} \leq \frac{n+h_i-1}{\phi(n+h_i-1)} \leq 2^{4i+2} \prod_{\substack{p \mid n+h_i-1 \\ \log_3{N} < p \leq \log{N}}} \frac{p}{\phi(p)}. \]
So to obtain condition (2) above, it is enough to show that the remaining product is bounded by $2$ with probability $1+o(1)$, as $N\to\infty$.  This follows immediately from the next lemma and Markov's inequality in elementary probability theory.

\begin{lem}\label{lem:phipexpectation}  Let $1 \le i \le k$. Then
\[ \E\bigg[\sum_{\substack{p \mid n+h_i-1 \\ \log_3{N} < p \leq \log{N}}} \log \frac{p}{\phi(p)}\bigg] = o(1),\] as $N\to\infty$.
\end{lem}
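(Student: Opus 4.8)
The plan is to prove the lemma by linearity of expectation, reducing it to a per-prime bound. Recalling that $\E[\cdot]=\frac{1}{S_1}\sum_{n\in\Omega}(\cdot)\,w(n)$ and interchanging the order of summation, the quantity to be estimated equals
\[
\frac{1}{S_1}\sum_{\log_3{N}<p\le\log{N}}\log\tfrac{p}{\phi(p)}\cdot T_p,\qquad\text{where }\ T_p\coloneqq\sum_{\substack{n\in\Omega\\p\mid n+h_i-1}}w(n).
\]
Since $\log\frac{p}{\phi(p)}=\log\!\big(1+\tfrac1{p-1}\big)\ll\tfrac1p$, and $S_1\gg\frac{\phi(W)^k}{W^{k+1}}N(\log{N})^k$ by Proposition \ref{prop:main-maynard} (using $\log R\asymp\log N$), it is enough to prove that, once $N$ is large,
\[
T_p\ll\frac1p\cdot\frac{\phi(W)^k}{W^{k+1}}N(\log{N})^k
\]
uniformly for $p$ in the indicated range; the displayed sum is then $\ll\sum_{\log_3{N}<p\le\log{N}}p^{-2}\ll 1/\log_3{N}=o(1)$.

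To bound $T_p$ I would expand the square in \eqref{eq:wndef}, reverse the order of summation, and proceed as in the standard treatment of $S_1$, keeping track of the extra congruence $n\equiv1-h_i\pmod p$. Writing $\mathbf d=(d_1,\dots,d_k)$ and $\mathbf e=(e_1,\dots,e_k)$, this gives $T_p=\sum_{\mathbf d,\mathbf e}\lambda_{\mathbf d}\lambda_{\mathbf e}\,\mathcal N(\mathbf d,\mathbf e)$, where $\mathcal N(\mathbf d,\mathbf e)$ counts $n\in[N,2N)$ with $n\equiv\nu\pmod W$, $n\equiv1-h_i\pmod p$, and $[d_l,e_l]\mid n+h_l$ for all $l$. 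As in the $S_1$ computation, $\lambda_{\mathbf d}\lambda_{\mathbf e}$ vanishes unless $\mathbf d,\mathbf e$ are squarefree and coprime to $W$, and the terms in which $W$ and the $[d_l,e_l]$ are not pairwise coprime contribute nothing (the shared prime would exceed $\log_3{N}>2k$, while $h_l-h_{l'}=(l-l')(2k)!$). The new feature is the interaction with $p$. If $p\mid[d_i,e_i]$, then $[d_i,e_i]\mid n+h_i$ forces $p\mid n+h_i$, which is incompatible with $p\mid n+h_i-1$, so $\mathcal N(\mathbf d,\mathbf e)=0$. If $p\mid[d_j,e_j]$ for some $j\neq i$, then $n\equiv-h_j$ and $n\equiv1-h_i\pmod p$ are compatible only when $p\mid h_i-h_j-1=(i-j)(2k)!-1$; since this is a fixed nonzero integer, that case is empty once $\log_3{N}$ exceeds its largest prime factor. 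Hence, for $N$ large, the only surviving terms have $W,p,[d_1,e_1],\dots,[d_k,e_k]$ pairwise coprime, and then $\mathcal N(\mathbf d,\mathbf e)=\frac{N}{pW\prod_l[d_l,e_l]}+O(1)$.

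It follows that, for $N$ large,
\[
T_p=\frac1p\cdot\frac{N}{W}\sum_{\substack{\mathbf d,\mathbf e\\W,\,[d_1,e_1],\dots,[d_k,e_k],\,p\ \text{pairwise coprime}}}\frac{\lambda_{\mathbf d}\lambda_{\mathbf e}}{\prod_{l=1}^{k}[d_l,e_l]}+O(N^{1/2}),
\]
the error coming from the $O(1)$'s and being $O(N^{1/2})$ since $\lambda_{\max}\ll(\log R)^k$ and $R^2=N^{2\theta}$ with $2\theta<\tfrac12$. The sum here is exactly the left-hand side of \eqref{eq:l23lhs} with $q=p$, so Lemma \ref{lem:tilde1} shows it equals $\tilde{S}_1+O\!\big(\tfrac1p\cdot\tfrac{\phi(W)^k}{W^{k+1}}N(\log N)^k\big)\ll\frac{\phi(W)^k}{W^{k+1}}N(\log N)^k$, whence $T_p\ll\tfrac1p\cdot\tfrac{\phi(W)^k}{W^{k+1}}N(\log N)^k$ as needed, and the lemma follows. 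I expect the only delicate point to be the analysis of how $p$ meets the $[d_l,e_l]$ in the previous paragraph --- in particular, seeing that the ``diagonal'' case $p\mid[d_i,e_i]$ is killed by the coprimality of the consecutive integers $n+h_i-1$ and $n+h_i$, and that the off-diagonal case $p\mid[d_j,e_j]$ ($j\neq i$) is eventually vacuous because it would force $p$ to divide one of finitely many fixed nonzero integers. With that dichotomy in hand, the decisive saving of $1/p$ is handed to us by Lemma \ref{lem:tilde1}, and the leftover $O(1)$ terms are negligible exactly as in the proof of Proposition \ref{prop:main-maynard}.
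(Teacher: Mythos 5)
Your proposal is correct and follows essentially the same route as the paper: expand $w(n)$, evaluate the inner count by CRT as $\frac{N}{pW\prod_l [d_l,e_l]}+O(1)$, absorb the $O(1)$ terms using $\lambda_{\max}\ll(\log R)^k$ and $R^2=N^{2\theta}<N^{1/2}$, and control the coprime $\lambda$-sum via Lemma \ref{lem:tilde1} (the paper uses just the remark following it), yielding the bound $O(1/\log_3 N)$. The only differences are cosmetic: you bound each $T_p$ uniformly before summing over $p$, and you spell out (correctly) why terms with $p\mid[d_l,e_l]$ vanish, a point the paper asserts without comment.
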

\begin{proof} Since $\log \frac{p}{\phi(p)} \ll \frac{1}{p}$, we see that
\begin{multline*} \E\bigg[\sum_{\substack{p \mid n+h_i-1 \\ \log_3{N} < p \leq \log{N}}} \log \frac{p}{\phi(p)}\bigg] = S_1^{-1} \sum_{\log_3{N} < p \leq \log{N}} \log\frac{p}{\phi(p)} \sum_{\substack{N \leq n < 2N \\ n \equiv \nu\pmod{W} \\ p \mid n+h_i-1}} w(n) \\ \ll S_1^{-1} \sum_{\log_3{N} < p \leq \log{N}} \frac{1}{p}\bigg(\sum_{\substack{d_1, \dots, d_k \\ e_1, \dots, e_k}} \lambda_{d_1, \dots, d_k} \lambda_{e_1, \dots, e_k} \sum_{\substack{N \leq n < 2N \\ n \equiv \nu\pmod{W} \\ [d_i, e_i] \mid n+h_i~\forall i\\p \mid n+h_i-1}} 1\bigg). \end{multline*}
The tuples $(d_1, \dots, d_k)$ and $(e_1,\dots, e_k)$ make a contribution to the inner double sum only if $W, [d_1, e_1], \dots, [d_k,e_k]$, and $p$ are relatively prime, in which case the innermost sum has size $\frac{N}{p W \prod_{i=1}^{k} [d_i,e_i]} + O(1)$. Using $''$ to denote this restriction on the $d_i$ and $e_i$, we find that
\begin{multline}\label{eq:expcomp}\sum_{\log_3{N} < p \leq \log{N}} \frac{1}{p}\bigg(\sum_{\substack{d_1, \dots, d_k \\ e_1, \dots, e_k}} \lambda_{d_1, \dots, d_k} \lambda_{e_1, \dots, e_k} \sum_{\substack{N \leq n < 2N \\ n \equiv \nu\pmod{W} \\ [d_i, e_i] \mid n+h_i~\forall i\\p \mid n+h_i-1}} 1\bigg) \\= \sum_{\log_3{N} < p \leq \log{N}}\frac{1}{p^2} \left(\frac{N}{W}\sideset{}{''}\sum_{\substack{d_1, \dots, d_k \\ e_1, \dots, e_k}} \frac{\lambda_{d_1, \dots, d_k} \lambda_{e_1, \dots, e_k}}{\prod_{i=1}^{k}[d_i, e_i]}\right) \\ + O\left(\left (\sum_{\log_3{N} < p \leq \log{N}} \frac{1}{p}\right) \left(\sum_{d_1, \dots, d_k} |\lambda_{d_1, \dots, d_k}|\right)^2\right). \end{multline}
The $O$-term here is
\[ \ll \log_3{N} \cdot \lambda_{\max}^2 \left(\sum_{r < R} \tau_k(r)\right)^2 \ll R^2 (\log{R})^{4k-2}\cdot \log_3{N}, \]
recalling that $\lambda_{\max} \ll (\log{R})^k$. Since $R = N^{\theta}$ with $\theta < \frac{1}{4}$, the $O$-term is $o(N^{1/2})$, which is tiny compared to $\frac{\phi(W)^k}{W^{k+1}} N (\log{N})^k$. From the remark following Lemma \ref{lem:tilde1}, the main term in \eqref{eq:expcomp} is
\[ \ll \bigg(\sum_{\log_3{N} < p \leq \log{N}}\frac{1}{p^2}\bigg) \cdot \bigg(\frac{\phi(W)^k}{W^{k+1}} N(\log{N})^k\bigg) \ll S_1/\log_3{N}. \]
Collecting our estimates gives $\E\bigg[\sum_{\substack{p \mid n+h_i-1 \\ \log_3{N} < p \leq \log{N}}} \log \frac{p}{\phi(p)}\bigg] = O(1/\log_3{N})$.
\end{proof}

This completes the proof of Theorem \ref{thm:main} for $\phi$, in the decreasing case. The increasing case follows in precisely the same way, but with the tuple $\Hh$ now given by $h_i = -(i-1)(2k)!$ for $1 \leq i \leq k$.

\subsection{The sum-of-divisors function $\sigma$} The proof that there are long increasing runs of $\sigma$ along consecutive shifted primes is parallel to the argument we just gave for long decreasing runs for $\phi$, once one replaces each ratio $\frac{n+h_i-1}{\phi(n+h_i-1)}$ with $\frac{\sigma(n+h_i-1)}{n+h_i-1}$. We quickly sketch the other necessary changes.

We once again assume $\Hh$ is defined by $h_i = (i-1)(2k)!$ for $1 \leq i \leq k$. The only difference in the choice of $\nu\bmod{W}$ is that in the range $\log_4{N} < p \leq \frac{1}{2}\log_3{N}$, we select the disjoint sets $\Pp_i$ so that
\begin{equation}\label{eq:selectionguarantee} 4i\log{2} \leq \log\frac{3}{2} + \sum_{p \in \Pp_i} \log \frac{\sigma(p)}{p} \leq (4i+1)\log{2}.\end{equation}
Exactly as before, the choice of $\nu$ immediately gives us
\begin{enumerate}
\item[(1$'$)] For $n\in \Omega$, any prime between $n+h_1$ and $n+h_k$ is one of $n+h_1,\dots, n+h_k$.
\end{enumerate}
It remains to verify that we also have
\begin{enumerate}
\item[(2$'$)] Each $\frac{\sigma(n+h_i-1)}{n+h_i-1} \in (2^{4i}, 2^{4i+3}]$ with probability $1+o(1)$.
\end{enumerate}
Here a small additional argument is required to account for the fact that $\frac{\sigma(n+h_i-1)}{n+h_i-1}$ is sensitive not only to the primes dividing $n+h_i-1$ but also to their multiplicities.

Fix $1 \leq i \leq k$. For any $N \leq n < 2N$ with $n\equiv \nu\pmod{W}$, \eqref{eq:selectionguarantee} guarantees the lower bound \[ \frac{\sigma(n+h_i-1)}{n+h_i-1} \ge 2^{4i}. \] In addition, we have
\begin{equation}\label{eq:squarefreeupper} \prod_{p \in \Pp_i \cup\{2\}}\frac{\sigma(p)}{p} \le 2^{4i+1}.\end{equation}
To go from \eqref{eq:squarefreeupper} to an upper bound on $\frac{\sigma(n+h_i-1)}{n+h_i-1}$, we use the following estimates, valid for every prime power $p^e$:
\[ \frac{\sigma(p^e)}{p^e} = 1+ \frac{1}{p} + \dots + \frac{1}{p^e} < \frac{p}{p-1}=\frac{\sigma(p)}{p} \frac{p^2}{p^2-1}. \]
Since the only primes $p\le \log_3{N}$ dividing $n+h_i-1$ are $2$ and the primes in $\Pp_i$,
\begin{align*} \frac{\sigma(n+h_i-1)}{n+h_i-1} &\leq\left(\prod_{p \in \Pp_i \cup\{2\}} \frac{\sigma(p)}{p} \prod_{p \in \Pp_i  \cup \{2\}} \frac{p^2}{p^2-1}\right) \prod_{\substack{p \mid n+h_i-1 \\ \log_3{N} < p \leq \log{N}}} \frac{p}{p-1}\prod_{\substack{p \mid n+h_i-1 \\ p > \log{N}}} \frac{p}{p-1} \\
	&\le 2^{4i+1} \cdot \left(\frac{4}{3}\prod_{p \in \Pp_i} \frac{p^2}{p^2-1}\right) \prod_{\substack{p \mid n+h_i-1 \\ \log_3{N} < p \leq \log{N}}} \frac{p}{p-1}\prod_{\substack{p \mid n+h_i-1 \\ p > \log{N}}} \frac{p}{p-1}.\end{align*}
The product over $p > \log{N}$ was estimated in the preceding section, where it was shown to be $1+o(1)$. To estimate the product over $p \in \Pp_i$, note that every element of $\Pp_i$ exceeds $\log_4{N}$ while each term in the product has the shape $1+O(1/p^2)$. Consequently, that product is also $1+o(1)$. Since $\frac{4}{3} < 2$, we conclude that for large $N$,
\[ 2^{4i} \le \frac{\sigma(n+h_i-1)}{n+h_i-1} \le 2^{4i+2} \prod_{\substack{p \mid n+h_i-1 \\ \log_3{N} < p \leq \log{N}}} \frac{p}{p-1}.\]
Lemma \ref{lem:phipexpectation} and Markov's bound now yield (2$'$). The rest of the proof of the increasing case for $\sigma$ is completed exactly as in the proof of the decreasing case for $\phi$. Long decreasing runs are obtained by replacing $\Hh$ with $-\Hh$.

\subsection{The count of prime divisors} For the remainder of the proof of Theorem \ref{thm:main}, we switch from using Proposition \ref{prop:main-maynard} to using Proposition \ref{prop:mm2}. We continue to view $\Omega$ as a finite probability space where the mass at $n_0$ is given by $w(n_0)/\sum_{n \in \Omega}w(n)$.

We again choose $\Hh$ as the admissible tuple defined by \eqref{eq:hidef}.

We now select $\nu\bmod{W}$. We choose $\nu \equiv 1\pmod{2}$, and for odd primes $p$, we proceed as follows:
\begin{enumerate}
\item[Range I.] $p \leq \frac{1}{4} \frac{\log{N}}{(\log_2{N})^2}$: Make a default selection of $\nu$ for $p$. In other words, choose $\nu$ so that $\nu \not\equiv h_1, \dots, h_k, 1-h_1, \dots, 1-h_k\pmod{p}$.
\item[Range II.] $\frac{1}{4} \frac{\log{N}}{(\log_2{N})^2} < p \leq \frac{1}{2} \frac{\log{N}}{(\log_2{N})^2}$: Choose disjoint sets of primes $\Pp_i$, $1 \leq i \leq k$, satisfying $\#\Pp_i = \lfloor i (\log_2{N}) (\log_3{N})\rfloor$.	For $p \in \Pp_i$, choose $\nu \equiv 1-h_i\pmod{p}$. For $p$ in this range not belonging to any of the $\Pp_i$, make a default selection of $\nu$ for $p$.
	\item[Range III.] $\frac{1}{2} \frac{\log{N}}{(\log_2{N})^2} < p \leq \frac{\log{N}}{(\log_2{N})^2}$: For each even $h\not\in\Hh$ that lies between $h_1$ and $h_k$, we choose a distinct prime $p^{(h)}$ from this range. We then choose $\nu \equiv -h\pmod{p^{(h)}}$ for each of these $h$. For the remaining values of $p$, we make a default selection of $\nu$ for $p$.
\end{enumerate}
From our initial choices of $k$, $F$, and $\theta$, there is (as before) a positive probability that an $n \in \Omega$ satisfies
\begin{enumerate}
\item[(0)] at least $K$ of $n+h_1, \dots, n+h_k$ are prime.
\end{enumerate}
Because of our choice of $\nu$ in Range III, each such  $n$ has the property that
\begin{enumerate}
\item[(1)] the only primes between $n+h_1$ and $n+h_k$ are on the list $n+h_1, \dots, n+h_k$.
\end{enumerate}
We now argue that for $n\in \Omega$,
\begin{enumerate}
\item[(2)] with probability $1+o(1)$, as $N\to\infty$, we have
\[ 0 < \omega(n+h_i-1) - i (\log_2{N}) (\log_3{N}) < (\log_2{N}) (\log_4{N}) \]
for all $1 \leq i \leq k$.
\end{enumerate}
Clearly, (1), (2), and our choice of $\Hh$ imply that with positive probability, $\omega(p-1)$ is strictly increasing on the set of primes $p$ between $n+h_1$ and $n+h_k$, while condition (0) tells us that there are at least $K$ such primes.

Let us tally the primes dividing $n+h_i-1$. Those not exceeding $\log{N}/(\log_2{N})^2$ are $2$, the primes in $\Pp_i$ --- of which there are $\lfloor i (\log_2{N})(\log_3{N}) \rfloor$ --- and possibly $p_{\bad}$. Notice that the number of prime divisors of $n+h_i-1$ exceeding $N^{1/\log_2{N}}$ is $O(\log_2{N})$. So to complete the proof of (2), it is enough to show that the number of primes $p$ dividing $n+h_i-1$ with $p \in \Ii\coloneqq (\log{N}/(\log_2{N})^2, N^{1/\log_2{N}}]$ is smaller than $\frac{1}{2}(\log_2{N}) (\log_4{N})$ with probability $1+o(1)$. Let $\tilde{\omega}(n) \coloneqq \sum_{p \mid n,~p \in \Ii} 1$. The desired estimate follows immediately from the next lemma.

\begin{lem} Let $1 \leq i \leq k$. As $N\to\infty$,
	\[ \E[\tilde{\omega}(n+h_i-1)] \sim \log_2{N}. \]
\end{lem}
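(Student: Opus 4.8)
The plan is to unfold the weighted expectation, isolate the contribution of each prime $p\in\Ii$ separately, and identify that contribution, via Lemma~\ref{lem:tilde1}, with a small perturbation of $\tilde S_1$. Writing out the expectation over the weighted sample space,
\[ \E[\tilde{\omega}(n+h_i-1)] = S_1^{-1}\sum_{p\in\Ii}\ \sum_{\substack{N\le n<2N\\ n\equiv\nu\pmod W\\ p\mid n+h_i-1}} w(n), \]
so it suffices to understand, for each $p\in\Ii$, the weighted count of $n$ with $p\mid n+h_i-1$. Fixing such a $p$ with $p\neq p_{\bad}$, I would unfold $w(n)=(\sum_{d_j\mid n+h_j\,\forall j}\lambda_{d_1,\dots,d_k})^2$ and reverse the order of summation, exactly as in the standard treatment of $S_1$; the inner sum becomes $\sum_{d_1,\dots,d_k,e_1,\dots,e_k}\lambda_{d_1,\dots,d_k}\lambda_{e_1,\dots,e_k}$ times the number of $n\in[N,2N)$ satisfying $n\equiv\nu\pmod W$, $[d_j,e_j]\mid n+h_j$ for all $j$, and the additional congruence $n\equiv 1-h_i\pmod p$.

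The first real step is to check that on every surviving term the moduli $W,[d_1,e_1],\dots,[d_k,e_k],p$ are pairwise coprime, so that the Chinese Remainder Theorem gives the count $\tfrac{N}{pW\prod_j[d_j,e_j]}+O(1)$. Coprimality of $W$ and the $[d_j,e_j]$ is part of the usual $S_1$ bookkeeping. That $p$ is coprime to each $[d_j,e_j]$ is the point to verify: since $\lambda$ vanishes unless the $d_j$ and $e_j$ are squarefree and coprime to $Wp_{\bad}$, a prime common to $p$ and some $d_j$ (or $e_j$) would have to exceed $\log N/(\log_2 N)^2$, while simultaneously dividing both $n+h_j$ and $n+h_i-1$, hence dividing $h_j-h_i+1$; but $h_j-h_i$ is a multiple of $(2k)!$, so $h_j-h_i+1$ is a nonzero integer of bounded size, and no such large prime divides it once $N$ is large. (When $j=i$ this just says $p\nmid 1$.) Summing the main term and collecting the $O(1)$'s, the inner sum becomes
\[ \frac1p\cdot\frac NW\sum_{\substack{d_1,\dots,d_k,\,e_1,\dots,e_k\\ W,\,[d_i,e_i],\,p\ \text{pairwise coprime}}}\frac{\lambda_{d_1,\dots,d_k}\lambda_{e_1,\dots,e_k}}{\prod_{j=1}^k[d_j,e_j]}+O\Big(\big(\textstyle\sum_{d_1,\dots,d_k}|\lambda_{d_1,\dots,d_k}|\big)^2\Big), \]
and the remaining sum is exactly the left-hand side of Lemma~\ref{lem:tilde1} with $q=p$ (legitimate because $p\in\Ii$ is coprime to $W$), hence equals $\tfrac1p\big(\tilde S_1+O(\tfrac1p\cdot\tfrac{\phi(W)^k}{W^{k+1}}N(\log N)^k)\big)$. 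It is essential here to use the full strength of Lemma~\ref{lem:tilde1}, extracting the genuine main term $\tilde S_1/p$, rather than merely the cruder bound of the Remark following it.

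It then remains to sum over $p\in\Ii$ and divide by $S_1$. With the standard estimates $\sum_{d_1,\dots,d_k}|\lambda_{d_1,\dots,d_k}|\ll\lambda_{\max}\sum_{r<R}\tau_k(r)\ll R(\log R)^{2k-1}$ and $\#\Ii\le N^{1/\log_2 N}=N^{o(1)}$, the accumulated contribution of the $O((\sum|\lambda|)^2)$ errors is $\ll N^{o(1)}R^2(\log R)^{4k-2}=N^{2\theta+o(1)}=o(N^{1/2})$, hence $o(S_1)$ — and this is precisely where the hypothesis $\theta<\tfrac14$ is used. Similarly, $\sum_{p\in\Ii}p^{-2}\ll(\log_2 N)/\log N$ makes the accumulated $O(\tfrac1{p^2}\cdot\tfrac{\phi(W)^k}{W^{k+1}}N(\log N)^k)$ errors $o(S_1)$ as well. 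If $p_{\bad}$ happens to lie in $\Ii$, the same analysis bounds its lone contribution to $\E[\tilde\omega(n+h_i-1)]$ by $\ll\tilde S_1/(p_{\bad}S_1)\ll 1/\log_2 N=o(1)$, using $p_{\bad}\gg\log_2 N$. Thus $\E[\tilde\omega(n+h_i-1)]=(1+o(1))\sum_{p\in\Ii}1/p$, and Mertens' theorem (in which the Mertens constant cancels) gives
\[ \sum_{p\in\Ii}\frac1p=\log\log\!\big(N^{1/\log_2 N}\big)-\log\log\!\Big(\tfrac{\log N}{(\log_2 N)^2}\Big)+o(1)=(\log_2 N-\log_3 N)-(\log_3 N+o(1))=\log_2 N-2\log_3 N+o(1)\sim\log_2 N, \]
and hence $\E[\tilde{\omega}(n+h_i-1)]\sim\log_2 N$, as claimed.

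The part I expect to be the main obstacle is not any single deep estimate but rather the careful control of error terms: verifying that the new modulus $p$ is automatically coprime to the $[d_j,e_j]$ (so that Lemma~\ref{lem:tilde1} applies verbatim), and ensuring that the $\#\Ii$-fold accumulation of $O(1)$ Chinese-remainder errors — together with the single exceptional prime $p_{\bad}$ — stays comfortably below $S_1$. Everything else is a transcription of the $S_1$ analysis combined with Lemma~\ref{lem:tilde1} and the known asymptotics $\tilde S_1\sim S_1\sim\tfrac{\phi(W)^k}{W^{k+1}}N(\log R)^k$.
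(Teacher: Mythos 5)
Your proposal is correct and follows essentially the same route as the paper: unfold the weighted expectation, swap the order of summation, count solutions of the congruence system by CRT with the extra factor $1/p$, control the accumulated $O(1)$ errors using $\lambda_{\max}\ll(\log R)^k$ and $\#\Ii\le N^{1/\log_2 N}$, apply Lemma \ref{lem:tilde1} with $q=p$ to extract $\tilde S_1/p$, and finish with Mertens and $S_1\sim\tilde S_1$. The extra details you supply (the coprimality of $p$ with the $[d_j,e_j]$ via $p\mid h_j-h_i+1$, and the separate treatment of $p_{\bad}$, which Lemma \ref{lem:tilde1} in fact handles automatically) are sound refinements of points the paper passes over quickly.
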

\begin{proof} We have $\E[\tilde{\omega}(n+h_i-1)] = S_1^{-1}  \sum_{n \in \Omega} \tilde{\omega}(n+h_i-1) w(n)$. Expanding out the definitions of $\tilde{\omega}$ and of $w(n)$, the sum on $n$ becomes
	\[ \sum_{p \in \Ii} \Bigg(\sum_{\substack{d_1, \dots, d_k \\ e_1, \dots, e_k}} \lambda_{d_1, \dots, d_k} \lambda_{e_1 \dots, e_k} \sum_{\substack{N \leq n < 2N \\ n \equiv \nu\pmod{W} \\ [d_i,e_i]\mid n+h_i~\forall i\\ p \mid n+h_i-1}} 1\Bigg). \]
The only way the tuples $(d_1, \dots, d_k)$ and $(e_1, \dots, e_k)$ contribute to the inner double sum is if $[d_1, e_1],\dots, [d_k, e_k], W$, and $p$ are pairwise coprime; in that case, the innermost sum is $\frac{N}{pW \prod_{i=1}^{k}[d_i,e_i]} + O(1)$. This gives rise to an error term that is
\begin{align*} \ll \left(\sum_{p \in \Ii} 1\right) \left(\sum_{d_1, \dots, d_k} |\lambda_{d_1, \dots, d_k}|\right)^2 &\ll N^{1/\log_2{N}} \cdot \lambda_{\max}^2 \left(\sum_{r < R} \tau_k(R)\right)^2 \\
&\ll N^{1/\log_2{N}} \cdot R^2 (\log{R})^{4k}, \end{align*}
which is smaller than $N^{1/2}$ for large $N$. (We used again that $\lambda_{\max} \ll (\log{R})^k$.) In particular, the error here is $o(\frac{\phi(W)^k}{W^{k+1}} N(\log{N})^k)$, since $W = N^{o(1)}$. Using Lemma \ref{lem:tilde1}, the main term contributes
\begin{align*} \sum_{p \in \Ii} \frac{N}{pW} &\sum_{\substack{d_1,\dots, d_k,  e_1, \dots, e_k \\ W,~[d_i, e_i],~p \text{ pairwise coprime}}}\frac{\lambda_{d_1, \dots, d_k} \lambda_{e_1, \dots, e_k}}{\prod_{i=1}^{k}[d_i, e_i]} =
\sum_{p \in \Ii} \frac{1}{p} \bigg(\tilde{S}_1 + O\big(\frac{1}{p} \frac{\phi(W)^k}{W^{k+1}} N (\log{N})^k\big)\bigg)\\
&= \tilde{S_1} (\log_2{N} + O(\log_3{N})) + o\big(\frac{\phi(W)^k}{W^{k+1}} N (\log{N})^k\big).
\end{align*}
Since $S_1 \sim \tilde{S}_1$ and both of these quantities have order $\frac{\phi(W)^k}{W^{k+1}} N (\log{N})^k$, the result follows.
\end{proof}

This completes the proof of the increasing case; the decreasing case follows from the same arguments upon replacing $\Hh$ with $-\Hh$.

\subsection{The count-of-divisors function}  Finally, we tackle the  function $\tau(n)$. We use precisely the same construction as in the previous section dealing with the count of prime divisors. Thus, at this point, we know that an $n\in \Omega$ satisfies all of (0), (1), and (2) with positive probability. We now show that
\begin{enumerate}
\item[(3)] with probability $1+o(1)$, as $N\to\infty$, we have
\[ \varrho(n+h_i-1) - \omega(n+h_i-1) < (\log_2{N})(\log_4{N}) \] for all $1 \leq i \leq k$.
\end{enumerate}
Recall that for us, $\varrho(n)$ denotes the number of primes dividing $n$, counted with multiplicity. For any natural number $n$, we have \[ 2^{\omega(n)} \leq \tau(n) \leq 2^{\varrho(n)}. \] Using these bounds along with (0)--(3), it follows that $\tau(p-1)$ is strictly increasing on the set of primes between $n+h_1$ and $n+h_k$. The decreasing case can  be deduced by the now-familiar trick of swapping $\Hh$ for $-\Hh$.

Fix $1 \leq i \leq k$. Call an $n \in \Omega$ for which the inequality in (3) fails \emph{exceptional}. We will show that the number of exceptional $n \in \Omega$ is
\begin{equation}\label{eq:acceptableerror}\ll \frac{N}{W (\log{N})^A} \qquad\text{for each fixed $A$.} \end{equation}
From Lemma \ref{lem:csprep} and Cauchy--Schwarz, the probability of selecting such an $n$ is then
\[ \frac{1}{S_1}\sum_{\substack{n \in \Omega \\ n\text{ exceptional}}} w(n) \ll S_1^{-1} \left(\frac{N}{W (\log{N})^A}\right)^{1/2} \left(\frac{N}{W} (\log{R})^{19k}\right)^{1/2}, \]
which is $o(1)$ if we take $A > 19k$.

Write the prime factorization of $n+h_i-1$ in the form $\prod p^{e_p}$. If $n$ is exceptional, then
\begin{equation}\label{eq:ifexceptional0} \sum_{p\mid n+h_i-1} (e_p-1) \ge \log_2{N} \log_4{N}. \end{equation}
	
Let
\begin{equation}\label{eq:Bdef} B\coloneqq \lfloor\frac{1}{4} \log_2{N} \log_4{N}\rfloor. \end{equation}

We can suppose that
\begin{equation}\label{eq:ifexceptional1} e_2-1 < B.\end{equation} For if $e_2 \ge B+1$, then
\[ n \equiv \nu \pmod{W} \quad\text{and}\quad n \equiv 1-h_i \pmod{2^{B+1}}, \]
so that $n$ belongs to a fixed residue class modulo $[W,2^{B+1}]$. (Remember that $i$ is \emph{fixed} at this stage of the argument.) The number of such $n \in [N,2N)$ is
\[\le \frac{N}{[W,2^{B+1}]}+1 \leq \frac{N}{W \exp(\frac{1}{8} \log_2{N} \log_4{N})},\]
for large $N$. This fits within our desired bound \eqref{eq:acceptableerror} on the size of the exceptional set, since $\exp(\frac{1}{8} \log_2{N} \log_4{N})$ grows faster than $(\log{N})^A$ for any fixed $A$.

We can also suppose that
\begin{equation}\label{eq:ifexceptional2}\sum_{p \in \Pp_i} (e_p-1) < B.\end{equation} To see this, let $P \coloneqq \prod_{p \in \Pp_i}p$. By our choice of $\nu$, we have that $P \mid n+h_i-1$. Let \[ n' = (n+h_i-1)/P.\] If $\sum_{p \in \Pp_i} (e_p-1) \ge B$, then $r \coloneqq \prod_{p \in \Pp_i} p^{e_p-1}$ is a divisor of $n'$ with $\varrho(r) \ge B$. Replacing $r$ with one of its proper divisors if necessary, we obtain a divisor $r$ of $n'$ supported on the primes in $\Pp_i$ and having $\varrho(r)=B$. Let us count the possibilities for $n'$ given $r$. Since $n+h_i-1 \le 3N$, we have $n' \le 3N/P$. Since $n\equiv \nu \pmod{W}$, the integer $n'$ is uniquely determined modulo $W/P$. Since $r \mid n'$ and $\gcd(r,W/P)=1$, we see that $n'$ lies in a fixed residue class modulo $rW/P$. Hence, the number of possibilities for $n'$, given $r$, is at most $\frac{3N/P}{rW/P} + 1 = \frac{3N}{rW}+1$. Now sum over the possible values of $r'$. We find that the total number of $n'$ that can arise this way is
\begin{equation}\label{eq:nprimetotal} \ll \frac{N}{W} \sum_{\substack{p \mid r \Rightarrow p \in \Pp_i \\ \varrho(r) = B}} \frac{1}{r} + (\#\Pp_i)^{B}. \end{equation}
Recalling our choices of $\#\Pp_i$ and $B$,
\[ (\#\Pp_i)^{B}\le (k (\log_2 N) \log_3 N)^{\log_2{N} \log_4{N}} = N^{o(1)}, \]
while (crudely)
\[ \sum_{\substack{p \mid r \Rightarrow p \in \Pp_i \\ \varrho(r) = B}}\frac{1}{r}  \leq \left(\sum_{p \in \Pp_i}\frac{1}{p}\right)^{B} \leq \left(\frac{4(\log_2{N})^2}{\log{N}} \cdot (k \log_2{N} \log_3{N}) \right)^{B} \leq \exp(-(\log_2{N})^2) \]
for large $N$. Plugging these estimates back into \eqref{eq:nprimetotal}, we see that $n'$ is restricted to a set of size
\[ \ll \frac{N}{W \exp((\log_2{N})^2)} + N^{o(1)}, \]
which fits within the bound \eqref{eq:acceptableerror}. Since $n'$ determines $n$, we may indeed assume that $\sum_{p \in \Pp_i} (e_p-1) < B$.

Since the only primes dividing $W$ and $n+h_i-1$ are $2$ and the primes in $\Pp_i$, combining \eqref{eq:ifexceptional0}--\eqref{eq:ifexceptional2} yields
\[ \sum_{\substack{p\mid n+h_i-1 \\ p \nmid W}} (e_p-1) \geq \frac{1}{2}\log_2{N} \log_4{N} \ge 2B. \]
Define
\[ s \coloneqq\prod_{\substack{p \mid n+h_i-1 \\ e_p \geq 2 \\ p\nmid W}} p^{e_p}. \]
Clearly, $s$ is squarefull. Moreover,
\[ \varrho(s) = \sum_{\substack{p \mid n+h_i-1 \\ e_p \geq 2 \\ p\nmid W}} e_p \ge \sum_{\substack{p \mid n+h_i-1 \\ p\nmid W}} (e_p-1) \ge 2B, \] so that $s \ge 2^{2B}$. Since $s$ divides $n+h_i-1$ and $\gcd(s,W)=1$, we see that $n$ is determined modulo $Ws$. Hence, the number of possibilities for $n$, given $s$, is $O(\frac{N}{Ws} + 1)$. Summing over all squarefull $s \in (2^{2B}, 3N]$ gives an upper bound on the number of these $n$ that is
\[ \ll \frac{N}{W} \exp\big(-\frac{1}{8}\log_2{N} \log_4{N}\big) + N^{1/2}, \]
which fits comfortably within the bound \eqref{eq:acceptableerror}. This completes the proof.

\section{Sums of digits of primes}
We need yet another variant of the Maynard--Tao theorem, namely Theorem 1 of \cite{BFTB14}, due to Banks, Freiberg, and Turnage--Butterbaugh.

\begin{prop}\label{prop:maynardsoft} Let $K$ and $k$ be positive integers with $K \geq 2$ and $k \geq e^{8K+5}$. Let $\Hh = \{h_1, \dots, h_k\}$ be an admissible $k$-tuple, and let $A\geq 1$ be a fixed integer coprime to $h_1 \cdots h_k$. Then for some subset $\{b_1, \dots, b_K\} \subset \{h_1, \dots, h_k\}$, there are infinitely many $n \in \N$ such that $An + b_1, \dots, An + b_K$ are consecutive primes.
\end{prop}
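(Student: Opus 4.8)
The plan is to deduce Proposition \ref{prop:maynardsoft} by combining the Maynard--Tao method with the pre-sieving device underlying Shiu's theorem \cite{shiu00}, exactly in the spirit of \cite{BFTB14}. The goal is to produce infinitely many $n$ for which \emph{(i)} at least $K$ of $An+h_1, \dots, An+h_k$ are prime, and \emph{(ii)} $An+j$ is composite for every integer $j$ with $h_1 \le j \le h_k$ and $j \notin \Hh$. If $n$ has both properties, then the primes occurring among $An+h_1, \dots, An+h_k$ are automatically \emph{consecutive} primes --- nothing strictly between the least and the greatest of them can be prime --- and there are at least $K$ of them; taking the first $K$ gives $An+b_1 < \dots < An+b_K$ with $\{b_1,\dots,b_K\}\subseteq\Hh$. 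Since $\Hh$ has only finitely many $K$-element subsets but there are infinitely many such $n$, one subset $\{b_1,\dots,b_K\}$ will serve for infinitely many $n$, which is the claim.

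To secure \emph{(ii)} I would use that $h_k-h_1$ is a fixed finite number (depending only on $\Hh$). For each of the finitely many $j\in[h_1,h_k]\setminus\Hh$, pick a prime $q_j$, with the $q_j$ pairwise distinct, each coprime to $A$, and each larger than $h_k-h_1$; only finitely many primes are forbidden, so this is possible. Imposing $An\equiv -j\pmod{q_j}$ --- a congruence on $n$, as $\gcd(A,q_j)=1$ --- forces $q_j\mid An+j$, hence $An+j$ is composite once $An+j>q_j$, which holds for all large $n$. Being at distinct primes, these congruences are simultaneously satisfiable.

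For \emph{(i)} I would apply the Maynard--Tao theorem in its linear-forms version (Maynard \cite[Prop.\ 4.1]{maynard14}; Proposition \ref{prop:main-maynard} above is the case of all leading coefficients equal to $1$) to the forms $An+h_1,\dots,An+h_k$, with $n$ confined to the residue class fixed above and absorbing $A$ and the $q_j$ into the small-prime modulus $W$ --- automatic once $N$ is large, as these are constants, so that $W=(\log N)^{o(1)}$ and only the classical Bombieri--Vinogradov theorem is required. The one point to check is that, after restricting $n$ to this residue class, $\prod_i(An+h_i)$ still has no fixed prime divisor: for $p\mid A$ this says $p\nmid h_1\cdots h_k$, which holds by hypothesis; for $p=q_j$ the prescribed residue leaves $An+h_i\equiv h_i-j\pmod{q_j}$, which is nonzero because $0<|h_i-j|\le h_k-h_1<q_j$; and for every other prime it follows from the admissibility of $\Hh$ upon translating $\Hh$ by a suitable value of $An\bmod p$. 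The hypothesis $k\ge e^{8K+5}$, together with the bound $M_k>\log k-2\log_2 k-2$, then gives $\tfrac14 M_k>K$, so that $\lceil\tfrac14 M_k\rceil\ge K$; hence for infinitely many $n$ (one in each dyadic interval $[N,2N)$ with $N$ large) at least $K$ of $An+h_1,\dots,An+h_k$ are prime.

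The main obstacle lies not in either ingredient separately but in their compatibility: the auxiliary congruences at the primes $q_j$, which sieve out the ``gap'' values $An+j$, must coexist with Maynard's own congruence conditions at the small primes without destroying the no-fixed-prime-divisor property of the tuple. This is handled for free because $h_k-h_1$ is bounded in terms of $k$, hence of $K$, so only finitely many $q_j$ are needed; choosing them large, distinct, and coprime to $A$ makes the two systems consistent at no quantitative cost. With \emph{(i)} and \emph{(ii)} in hand, the pigeonhole step of the first paragraph finishes the argument.
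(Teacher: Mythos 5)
The paper does not actually prove this proposition; it is quoted as Theorem 1 of Banks--Freiberg--Turnage-Butterbaugh \cite{BFTB14}. Your sketch is a correct reconstruction of essentially that argument --- force compositeness of the gap values $An+j$, $j\in[h_1,h_k]\setminus\Hh$, by fixed congruence conditions, run the Maynard--Tao sieve on the still-admissible forms $An+h_i$ with the constants $A$ and the auxiliary primes absorbed into the pre-sieving modulus (your coprimality checks at $p\mid A$, at the $q_j$, and at the remaining primes are exactly the needed ones), and pigeonhole over the finitely many $K$-element subsets --- and it is the same device (the Range III congruences) that the paper itself uses in its proof of Theorem \ref{thm:main}.
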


It is possible to derive affirmative answers to Sierpi\'nski's questions directly from Proposition \ref{prop:maynardsoft}, making use of ad hoc elementary constructions. However, we feel it is more enlightening to appeal to a deep recent result of Drmota, Mauduit, and Rivat \cite[Theorem 1.1]{DMR09}.

\begin{prop}\label{prop:DMR} Fix $g\ge 2$. Let $\mu_g\coloneqq \frac{g-1}{2}$ and $\sigma_g^2 \coloneqq \frac{g^2-1}{12}$. For each integer $\ell \ge 0$ with $\gcd(\ell,g-1)=1$, we have
	\[ \#\{p \leq x: s_g(p)= \ell\} = \frac{g-1}{\phi(g-1)} \frac{\pi(x)}{\sqrt{2\pi \sigma_g^2\cdot \frac{\log{x}}{\log{g}}}} \left(e^{-\frac{(\ell - \mu_g \log{x}/\log{g})^2}{2\sigma_g^2 \log{x}/\log{g}}} + O_{\epsilon,g}((\log{x})^{-\frac{1}{2}+\epsilon})\right), \]
	where $\epsilon > 0$ is arbitrary but fixed.
\end{prop}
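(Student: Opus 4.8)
The plan is to prove Proposition~\ref{prop:DMR} by first establishing a uniform estimate for the exponential sum $\sum_{n \le x}\Lambda(n)\,e(\vartheta\, s_g(n))$ (where $e(t) \coloneqq e^{2\pi i t}$ and $\Lambda$ is the von Mangoldt function) and then extracting the local limit law by Fourier inversion. After partial summation it suffices to control $\sum_{p \le x} e(\vartheta\, s_g(p))$, and the point is that $s_g$ is \emph{almost additive}: one has $s_g(a + bg^{\lambda}) = s_g(a) + s_g(b)$ whenever $a < g^{\lambda}$, and carries between digit blocks contribute only a bounded discrepancy. This near-multiplicativity with respect to blocks of digits is what makes the digital function amenable to the bilinear machinery of analytic number theory.

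First I would reduce to bilinear sums. Apply Vaughan's identity (or the Heath--Brown identity) to write $\sum_{n \le x}\Lambda(n) e(\vartheta s_g(n))$ as a bounded combination of ``type~I'' sums $\sum_{m \le M}\sum_{mn \le x} a_m\, e(\vartheta s_g(mn))$ with $|a_m| \ll 1$, and ``type~II'' sums $\sum_{m}\sum_{n} a_m b_n\, e(\vartheta s_g(mn))$ with $M$ and $N$ confined to suitable ranges. The type~I sums are handled by fixing $m$, truncating $s_g(mn)$ to its top digits, and summing a short geometric-type series in $n$; the almost-additivity keeps the truncation error under control.

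The heart of the matter is bounding the type~II sums. Following the method of Mauduit and Rivat, I would truncate $s_g$ to its top $\lambda$ digits, expand the resulting truncated digital function in a finite Fourier series $e(\vartheta s_g(n)) \approx \sum_{h} c_h\, e(hn/g^{\mu})$, and apply van der Corput's inequality (or a double large sieve) to the ensuing linear exponential sums $\sum_n e(\alpha n)$ with $\alpha$ near $h/g^{\mu}$. The essential quantitative input is a \emph{carry lemma} controlling how rarely a carry propagates across many digits; this lets one replace $s_g(mn)$, for most pairs $(m,n)$, by a function of a bounded block of digits of $mn$. Pushing this through gives a bound of the shape $\sum_{n \le x}\Lambda(n)\, e(\vartheta s_g(n)) \ll x^{1-\delta(g)}$ uniformly for $\lVert (g-1)\vartheta\rVert \ge x^{-c}$ with explicit $\delta(g), c > 0$, together with a sharper bound, with the correct dependence on the distance to a rational $a/(g-1)$, when $\vartheta$ lies near such a point.

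Finally I would run the Fourier-analytic local limit argument. Write $\#\{p \le x : s_g(p) = \ell\} = \int_0^1 \bigl(\sum_{p \le x} e(\vartheta s_g(p))\bigr) e(-\ell\vartheta)\,d\vartheta$. Since $s_g(p) \equiv p \pmod{g-1}$, the integrand concentrates near the $g-1$ rationals $a/(g-1)$; near each, substitute $\vartheta = a/(g-1) + \psi$ and use the near-additivity of $s_g$ together with a central limit theorem for the (essentially independent) digits --- with mean $\mu_g L$ and variance $\sigma_g^2 L$, where $L = \log x/\log g$ --- to replace the sum by a Gaussian characteristic function $\exp(2\pi i \psi \mu_g L - 2\pi^2 \psi^2 \sigma_g^2 L)$ for $|\psi| \le L^{-1/2+\epsilon}$, while the exponential-sum bound from the previous step disposes of the range $|\psi| > L^{-1/2+\epsilon}$. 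Summing over $a$ coprime to $g-1$ produces the factor $\frac{g-1}{\phi(g-1)}$, and evaluating the Gaussian integral yields the stated density; the error term $O_{\epsilon,g}((\log x)^{-1/2+\epsilon})$ comes from the saddle-point remainder and the cut-off at $|\psi| = L^{-1/2+\epsilon}$. The main obstacle, by a wide margin, is the type~II estimate: getting a power saving that is uniform in $\vartheta$ and still strong enough near the rationals $a/(g-1)$ requires the delicate carry-propagation analysis married to van der Corput's method, and everything else is, comparatively, bookkeeping with Vaughan's identity and a careful but routine local limit computation.
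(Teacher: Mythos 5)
This proposition is not proved in the paper at all: it is quoted verbatim from Drmota--Mauduit--Rivat \cite[Theorem 1.1]{DMR09}, and the paper uses it as a black box. So the comparison to make is with the proof in \cite{DMR09}, and there your outline is a faithful reconstruction of the actual strategy: Fourier inversion in $\vartheta$, a Vaughan-type decomposition of $\sum_{n\le x}\Lambda(n)e(\vartheta s_g(n))$ into type~I and type~II sums, the Mauduit--Rivat truncation-and-carry analysis of $s_g(mn)$ to get a power-saving bound away from the rationals $a/(g-1)$, and then a local limit computation near those rationals, with the congruence $s_g(p)\equiv p\pmod{g-1}$ and the prime number theorem in progressions producing the factor $\frac{g-1}{\phi(g-1)}$.

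The genuine gap is that, as written, this is a plan rather than a proof: every decisive estimate is asserted, not established. The uniform bound $\sum_{n\le x}\Lambda(n)e(\vartheta s_g(n))\ll x^{1-\delta}$ with the correct dependence on the distance from $\vartheta$ to $a/(g-1)$, the carry-propagation lemma that underlies it, and the quantitative \emph{local} (not merely central) limit theorem with error $O_{\epsilon,g}((\log x)^{-1/2+\epsilon})$ uniform in $\ell$ together constitute essentially the entire content of \cite{DMR09} and of the Mauduit--Rivat work it rests on; describing what remains after the type~II estimate as ``bookkeeping'' substantially understates it --- in particular, extracting an asymptotic for an individual value of $\ell$ (rather than equidistribution in residue classes mod $g-1$) is exactly the hard new point of that paper, and the saddle-point analysis near each $a/(g-1)$ with the stated error term requires the refined behavior of the exponential sum at small $\psi$, which your sketch does not supply. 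For the purposes of the present paper the correct move is the one the authors make, namely to cite \cite{DMR09}; note also that the applications here (Proposition \ref{prop:maynardsoft} combined with Proposition \ref{prop:DMR}) only need a lower bound of the right order of magnitude for $\#\{p\le x: s_g(p)=\ell\}$ with $\ell$ near the mean, but no substantially easier route to even that weaker statement is known.
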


\begin{proof}[Proof that there are runs of arbitrary length on which $s_g(p)$ is constant] Let the length $K$ be given, and let $k$ be the smallest integer exceeding $e^{8K+5}$.  If $x$ is large and $\ell$ is chosen to be the nearest integer to $\mu_g\log{x}/\log{g}$ that is coprime to $g-1$, then Proposition \ref{prop:DMR} ensures that the count of $p \leq x$ with $s_g(p) = \ell$ is $\gg_{g} x/(\log{x})^{3/2}$. So for a sufficiently large  fixed choice of $x$, we will get at least $k$ such values of $p$, each of which satisfies $p > \max\{g, k\}$. Pick $k$ of these primes, say $p_1, p_2, \dots, p_k$. Since $p_1, \dots, p_k$ are distinct primes exceeding $k$, it is clear that $\{p_1, \dots, p_k\}$ is an admissible $k$-tuple. Now apply Proposition \ref{prop:maynardsoft} with $A=g^N$, where $N$ is chosen so that $g^N$ is larger than each of $p_1, \dots, p_k$. We obtain $K$  consecutive primes each of which has the same sum of digits.
\end{proof}

\begin{proof}[Proof that there are arbitrarily long increasing and decreasing runs] Let $K$ be given, and let $k$ be the smallest integer exceeding $e^{8K+5}$. 	
With $x$ large, we let $\ell_1 < \ell_2 < \dots < \ell_k$ be the smallest integers exceeding $\mu_g \log{x}/\log{g}$ that are coprime to $g-1$. Let $x_j \coloneqq 2^{j-1} x$ for $1 \leq j \leq k$. Then each $\ell_i = \mu_g\log{x}/\log{g} + O_{g,K}(1)$, and each $\log{x_j} = \log{x} + O_{K}(1)$. So from Proposition \ref{prop:DMR}, for each pair of $i$ and $j$, there is a prime $p \in [x_j, 2x_j)$ with $s(p) = \ell_j$. By choosing $x$ large enough, we guarantee that $p > \max\{g, k\}$.

We apply this to pick primes $p_1, \dots, p_k$ with each $p_i \in [x_i, 2x_i)$ and $s_g(p_i)=\ell_i$. Then $p_1 < p_2 < \dots < p_k$ and $s_g(p_1) < s_g(p_2) < \dots < s_g(p_k)$. Now apply Proposition \ref{prop:maynardsoft} with $\Hh=\{p_1, \dots, p_k\}$ and $A = g^N$, where $g^N$ is larger than $\max\{p_1, \dots, p_k\}$. This gives infinitely many runs of $K$ consecutive primes on which $s_g$ is increasing. To get decreasing runs, we proceed in the same way but choose $p_i \in [x_i, 2x_i)$ so that $s_g(p_i)=\ell_{k+1-i}$.
\end{proof}

\section*{Acknowledgements}
We thank Bill Banks and Tristan Freiberg for their assistance with the proof of Proposition \ref{prop:mm2}. We also thank the referee for feedback that led to improvements in the exposition. The first author is supported by NSF award DMS-1402268. The second author is supported by an AMS Simons Travel Grant.

\providecommand{\bysame}{\leavevmode\hbox to3em{\hrulefill}\thinspace}
\providecommand{\MR}{\relax\ifhmode\unskip\space\fi MR }
\providecommand{\MRhref}[2]{%
  \href{http://www.ams.org/mathscinet-getitem?mr=#1}{#2}
}
\providecommand{\href}[2]{#2}

\end{document}